\theoremstyle{plain}
\newtheorem{corollary}{\bf Corollary}
\newtheorem{example}{\bf Example}
\newtheorem{lemma}{\bf Lemma}
\newtheorem{proposition}{\bf Proposition}
\newtheorem{remark}{\bf Remark}
\newtheorem{theorem}{\bf Theorem}
\numberwithin{equation}{section}
\begin{document}
\title[Gradient Einstein solitons]{Geometric and analytic results for Einstein solitons}
\author[Enrique F. L. Agila]{Enrique Fernando López Agila$^1$}
\author[José N. V. Gomes]{José Nazareno Vieira Gomes$^2$}
\address{$^1$Facultad de Ciencias Naturales y Matemáticas, Escuela Superior Politécnica del Litoral, Guayaquil, Ecuador.}
\address{$^{2}$Departamento de Matemática, Centro de Ciências Exatas e Tecnologia, Universidade Federal de São Carlos, São Paulo, Brazil.}
\email{$^1$enfelope@espol.edu.ec}
\email{$^2$jnvgomes@ufscar.br}
\urladdr{$^1$https://www.fcnm.espol.edu.ec}
\urladdr{$^2$https://www2.ufscar.br}
\keywords{Einstein soliton; warped metric; volume growth}
\subjclass[2010]{Primary 53C21, 53C25; Secondary 53C15}
\begin{abstract}
We compute a lower bound for the scalar curvature of a gradient Einstein soliton under a certain assumption on its potential function. We establish an asymptotic behavior of the potential function on a noncompact gradient shrinking Einstein soliton. As a result, we obtain the finiteness of its fundamental group and its weighted volume. We also prove some geometric and analytic results for constructing gradient Einstein solitons that are realized as warped metrics, and we give a few explicit examples.
\end{abstract}

\maketitle
\section{Introduction}
Our objective here is to prove some geometric and analytic results for gradient $\rho$-Einstein solitons that have not yet been addressed in the literature. A Riemannian metric $g$ on an $n$-dimensional smooth manifold $M^n$, with $n\geq3$, is a gradient $\rho$-Einstein soliton if there exists a smooth function $\varphi:M^n\rightarrow\mathbb{R}$ such that the Ricci tensor of $g$ is given by
\begin{equation}\label{eqres}
Ric+\nabla^2 \varphi=(\lambda+\rho S)g,
\end{equation}
for some real numbers $\rho\neq0$ and $\lambda$. Here, $\nabla^2 \varphi$ is the Hessian of $\varphi$ and $S=tr(Ric)$ is the scalar curvature of $g$. The function $\varphi$ is called the potential function of a gradient $\rho$-Einstein soliton $(M^n,g,\varphi)$. The soliton is trivial whenever $\nabla\varphi$ is parallel. The reason for considering $n\geq3$ is that, for $n=2$, equation~\eqref{eqres} reduces to the gradient Yamabe solitons equation, which are special solutions to the Yamabe flow. In this case, we refer the reader to Daskalopoulos and Sesum~\cite{daskalo}.

It is known that a gradient $\rho$-Einstein soliton arises as a self-similar solution of the Ricci Bourguignon flow~\cite{bourguignon1981ricci}, namely
\begin{equation*}
\frac{\partial}{\partial t}g(t)=-2 \big(Ric - \rho Sg(t)\big),
\end{equation*} 
with $g(0)=g$ on a smooth manifold $M^n$, which has a unique solution for a positive time interval, when $\rho<\frac{1}{2(n-1)}$ and $M^n$ is compact, for any initial Riemannian metric $g$, see Catino et al.~\cite[Theorem~2.1]{catino2017ricci}. A gradient $\rho$-Einstein soliton is steady for $\lambda=0$, shrinking for $\lambda>0$ and expanding for $\lambda<0$. By a complete gradient $\rho$-Einstein soliton $(M^n,g,\varphi)$, we mean a geodesically complete Riemannian manifold $(M^n,g)$ with a potential function $\varphi$ such that $\nabla\varphi$ is complete.

It is also known that any compact gradient $\rho$-Einstein soliton with $\rho\leq\frac{1}{2(n-1)}$, is either shrinking of positive scalar curvature or it is trivial. Moreover, any compact gradient Schouten soliton (i.e., $\rho=\frac{1}{2(n-1)}$) or traceless Ricci soliton (i.e., $\rho=\frac{1}{n}$) is trivial, see Catino and Mazzieri~\cite[Theorem~3.1]{catino2016gradient}. It is also trivial any complete gradient steady Schouten soliton, see~\cite[Theorem~1.5]{catino2016gradient}. The examples we present in Section~\ref{Sec-Examples} include two nontrivial 3-dimensional complete noncompact traceless Ricci solitons. The fact that they happen to be of dimension three comes from an analytic argument, see Examples~\ref{traceless1} and \ref{traceless2}. Note that the latter are examples of a steady and a shrinking soliton, respectively.

Catino and Mazzieri also proved that any complete gradient Schouten soliton which is either shrinking or steady has nonnegative scalar curvature. This was a consequence of a special case of a more general result on complete ancient solutions for the Schouten flow, see \cite[Proposition~5.1 and Corollary~5.2]{catino2016gradient}. Very recently, Borges~\cite[Theorem~1.1]{valterborges2022} showed that the scalar curvature $S$ of any geodesically complete noncompact gradient Schouten soliton that is either shrinking or expanding is bounded. More precisely, it satisfies $0\leq \lambda S \leq 2(n-1) \lambda^{2}$.

Example~\ref{exemimcp} is a nontrivial incomplete noncompact gradient Schouten soliton with unbounded negative scalar curvature, which can be either steady, shrinking or expanding, and then it shows that the assumption of completeness is essential for the results by Catino and Mazzieri, and by Borges.

In our first theorem, we give our contribution to this subject by considering a special class of gradient $\rho$-Einstein solitons. 

\begin{theorem}\label{ESRBF}
Let $(M^n,g,\varphi)$ be a geodesically complete gradient $\rho$-Einstein soliton with $0<\rho<\frac{1}{2(n-1)}$. Suppose there is a point $p\in M^n$ that satisfies either one of the following two conditions for the scalar curvature and potential function
\begin{enumerate}
\item $S(x)\geq -Ar^2(x)-B\quad\text{and}\quad|\nabla\varphi(x)|\leq Ar(x)+B,\,\text{or}$
 \item $S(x)\geq -A\quad\text{and}\quad \varphi(x)\geq-Ar(x)^2-B,$
\end{enumerate}
for all $r(x)\geq r_0$ and some nonnegative constants $A,B$ and $C$, where $r(x)$ is the distance function from $p$ and $r_0$ is a sufficiently large positive integer. Then,
 \begin{equation*}
 S(x)\geq \min\left\{0,\frac{\lambda n}{1-\rho n}\right\}.
 \end{equation*}
\end{theorem}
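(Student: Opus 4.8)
The plan is to convert \eqref{eqres} into a single elliptic differential inequality for $S$ driven by a downward parabola, and then to invoke a maximum principle whose reach at infinity is exactly what hypotheses (1) and (2) supply. First I would derive the scalar equation: tracing \eqref{eqres} gives $\Delta\varphi=n\lambda-(1-n\rho)S$, hence $\nabla\Delta\varphi=-(1-n\rho)\nabla S$. Taking the divergence of \eqref{eqres}, using the contracted second Bianchi identity $\mathrm{div}\,\mathrm{Ric}=\tfrac{1}{2}\nabla S$ and the commutation formula $\mathrm{div}(\nabla^{2}\varphi)=\nabla\Delta\varphi+\mathrm{Ric}(\nabla\varphi)$, one obtains $\mathrm{Ric}(\nabla\varphi)=c\,\nabla S$ with $c:=\tfrac{1}{2}-(n-1)\rho$, which is positive precisely because $\rho<\tfrac{1}{2(n-1)}$. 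Taking the divergence once more and substituting $\nabla^{2}\varphi=(\lambda+\rho S)g-\mathrm{Ric}$ from \eqref{eqres} yields
\begin{equation*}
c\,\Delta S-\tfrac{1}{2}\langle\nabla\varphi,\nabla S\rangle=\lambda S+\rho S^{2}-|\mathrm{Ric}|^{2}.
\end{equation*}
Since $|\mathrm{Ric}|^{2}\ge\tfrac{1}{n}S^{2}$ and $1-\rho n>0$ (because $\rho<\tfrac{1}{2(n-1)}<\tfrac{1}{n}$ for $n\ge3$), writing $Lu:=c\,\Delta u-\tfrac{1}{2}\langle\nabla\varphi,\nabla u\rangle$ we get $LS\le P(S)$ with $P(t):=\lambda t-\tfrac{1-\rho n}{n}t^{2}$.

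Next I would analyze $P$. It opens downward, with roots $0$ and $\tfrac{n\lambda}{1-\rho n}$, so $P(t)<0$ for every $t<m$, where $m:=\min\{0,\tfrac{n\lambda}{1-\rho n}\}$ is exactly the asserted bound. Set $\psi:=(m-S)_{+}\ge0$; the theorem becomes $\psi\equiv0$. Writing $P(t)=\mu(t-m)-\tfrac{1-\rho n}{n}(t-m)^{2}$, one checks $\mu=P'(m)=\lambda\ge0$ when $m=0$ and $\mu=-\lambda\ge0$ when $m=\tfrac{n\lambda}{1-\rho n}$, so $\mu\ge0$ in both cases. On the open set $\{\psi>0\}=\{S<m\}$ we have $\nabla\psi=-\nabla S$, $\Delta\psi=-\Delta S$, hence
\begin{equation*}
L\psi=-LS\ge-P(S)=\mu\psi+\tfrac{1-\rho n}{n}\psi^{2}\ge\tfrac{1-\rho n}{n}\psi^{2}>0 .
\end{equation*}
Thus $\psi$ is a nonnegative subsolution of the elliptic operator $L$ with a superlinear reaction term on its positivity set, vanishing together with its gradient on $\partial\{\psi>0\}$; at an interior local maximum of $\psi$ lying in $\{\psi>0\}$ one would have $\nabla\psi=0$ and $\Delta\psi\le0$, contradicting the strict inequality, so if $\{\psi>0\}$ were bounded it would be empty.

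It remains to rule out $\{\psi>0\}$ being unbounded, and this is where (1) and (2) enter. Testing $L\psi\ge\tfrac{1-\rho n}{n}\psi^{2}$ against $\eta^{2}\psi$, with $\eta$ a cutoff equal to $1$ on $B_{R}(p)$, supported in $B_{2R}(p)$, with $|\nabla\eta|\le C/R$, integrating by parts and absorbing $\int\eta^{2}|\nabla\psi|^{2}$ by Young's inequality, one is left with $\tfrac{1-\rho n}{n}\int\eta^{2}\psi^{3}$ bounded by boundary-layer integrals of $\psi^{2}|\nabla\eta|^{2}$ and $\eta^{2}\psi^{2}|\nabla\varphi|^{2}$. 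Under (1), $S\ge-Ar^{2}-B$ bounds $\psi$ by $Cr^{2}$ on the annulus and $|\nabla\varphi|\le Ar+B$ controls the drift term, and the superlinear $\psi^{3}$ term — in the spirit of a Keller--Osserman a priori estimate — is precisely what tolerates this quadratic growth and forces the right-hand side to $0$ as $R\to\infty$. Under (2), $S\ge-A$ makes $\psi$ bounded while $\varphi\ge-Ar^{2}-B$ controls the density $e^{-\varphi/(2c)}$ natural to the drift, so the estimate is better organized against the measure $e^{-\varphi/(2c)}\,dV$ (note $S\ge-A$ also yields $\Delta\varphi\le n\lambda+(1-n\rho)A$, i.e.\ mild superharmonicity of the weight), and a Karp/Grigor'yan-type exhaustion closes it. Either way $\psi\equiv0$, i.e.\ $S\ge m=\min\{0,\tfrac{n\lambda}{1-\rho n}\}$.

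I expect this last step to be the main obstacle: executing a noncompact maximum principle with no a priori curvature bound, so that the cutoff/barrier estimate genuinely closes at the growth rates permitted by (1) and (2); the two cases will likely need different implementations — an unweighted argument leaning on $|\nabla\varphi|\le Ar+B$ in case (1), a drift-weighted one leaning on $\varphi\ge-Ar^{2}-B$ in case (2). Finally, the degenerate situation $\lambda=0$ — where $m=0$, $\mu=0$, and $P(t)=-\tfrac{1-\rho n}{n}t^{2}\le0$ — should be handled separately; there the argument simplifies and recovers $S\ge0$ for complete steady $\rho$-Einstein solitons.
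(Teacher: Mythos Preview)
Your derivation of the scalar differential inequality is correct and coincides with the paper's: the operator $L=c\Delta-\tfrac12\langle\nabla\varphi,\nabla\cdot\rangle$ is exactly $c\,\Delta_{\hat\varphi}$ with $\hat\varphi=\varphi/(2c)$, and your $LS\le P(S)$ is the paper's $\Delta_{\hat\varphi}S\le\xi_1 S+\xi_2 S^2$. The reduction to $\psi=(m-S)_+$ and the interior-maximum argument on a bounded positivity set are also fine.

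The genuine gap is the noncompact step. Every version of the closing argument you invoke --- the cutoff estimate, Keller--Osserman, Karp/Grigor'yan --- presupposes some control on the (weighted) volume of geodesic balls, and you never establish any. With no a~priori Ricci lower bound, $\mathrm{vol}(B_R)$ can grow arbitrarily fast, so your annulus terms $\int\psi^2|\nabla\eta|^2$ and $\int\eta^2\psi^2|\nabla\varphi|^2$ need not tend to zero; the superlinearity of $\psi^3$ does not by itself absorb uncontrolled volume. Your remark in case~(2) that $\varphi\ge -Ar^2-B$ ``controls the density $e^{-\varphi/(2c)}$'' only yields $e^{-\hat\varphi}\le e^{C r^2}$, which is useless without a companion bound on the Riemannian volume; and if instead you integrate the drift term by parts, the $\tfrac14\int\eta^2\psi^2\Delta\varphi$ contribution produces, via $\Delta\varphi=n\lambda_+ +(1-n\rho)\psi$ on $\{\psi>0\}$, a term $\tfrac{1-n\rho}{4}\int\eta^2\psi^3$ that dominates your good term $\tfrac{1-n\rho}{n}\int\eta^2\psi^3$ once $n\ge 4$.

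This is precisely where the paper does the real work. It proves (its Lemma~1) that under either hypothesis~(1) or~(2) one has $\mathrm{vol}_{\hat\varphi}(B(p,R))\le A_0+B_0\int_{r_0}^R e^{C_0 t^2}\,dt$, by feeding the soliton equation into the Riccati inequality for $\Delta r$ along radial geodesics and carrying out a nontrivial integral estimate. With this $e^{C R^2}$-type growth in hand, the criterion $R/\log\mathrm{vol}_{\hat\varphi}(B_R)\notin L^1(+\infty)$ is met, so the Pigola--Rimoldi--Setti machinery applies: first their a~priori $L^\infty$ estimate forces $u=\max\{-S,0\}$ to be bounded (this is how case~(1) acquires the boundedness you simply assume in case~(2)), and then the weak maximum principle at infinity for $\Delta_{\hat\varphi}$ yields $0\ge \xi_1 u^*-\xi_2(u^*)^2$, which gives the stated bound. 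In short, your outline is right but stops exactly at the point that carries the content of the proof.
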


\begin{remark}
The potential function assumptions of Theorem~\ref{ESRBF} occur for any geodesically complete noncompact gradient shrinking Schouten soliton, see Borges~\cite{valterborges2022}. For the case of gradient Ricci solitons, see Cao and Zhou~\cite{cao2010complete}, and also  Munteanu and Sesum~\cite{munteanu2013gradien}.
\end{remark}

The proof of Theorem~\ref{ESRBF} follows from a suitable control for the weighed volume growth of geodesic balls together with the weak maximum principle at infinity for an appropriate drifted Laplacian, see Lemmas~\ref{lemvolex1} and~\ref{princwf} for details.

Having proved Theorem~\ref{ESRBF}, we work to establish an asymptotic behavior of the potential function on a geodesically complete noncompact gradient shrinking $\rho$-Einstein soliton. For this, we follow the approach by Cao and Zhou from which we know that the potential function $\varphi$ of an arbitrary complete noncompact gradient shrinking Ricci soliton $(M^n,g)$ has an asymptotic behavior similar to the potential function of a Gaussian shrinking soliton. Indeed, from the proof of Proposition~2.1 in \cite{cao2010complete}, we can conclude that for every minimizing unit speed geodesic $\gamma:[0,\infty)\to M^n$ emanating from a point $p\in M^n$ the potential function along $\gamma$ satisfies
\begin{equation*}
\lim_{t\rightarrow+\infty}\frac{1}{t}\frac{d\varphi}{dt}(t)=\lambda.
\end{equation*}

In our second theorem, we obtain a similar result for the potential function of a geodesically complete noncompact gradient shrinking $\rho$-Einstein soliton with nonnegative scalar curvature and $\rho>0$. Notice that, in the shrinking case, the assumptions in Theorem~\ref{ESRBF} imply that the scalar curvature is nonnegative. Also, Borges' result above mentioned guarantees that any geodesically complete noncompact gradient shrinking Schouten soliton has nonnegative scalar curvature. 

\begin{theorem}\label{teocresc2}
Let $(M^n,g,\varphi)$ be a geodesically complete noncompact gradient shrinking $\rho$-Einstein soliton with nonnegative scalar curvature and $\rho>0$. Then:
\begin{enumerate}
\item For any fixed point $p\in M^n$, and every minimizing unit speed geodesic $\gamma:[0,\infty)\rightarrow M^n$ emanating from $p$, we obtain
\begin{equation}\label{crepote}
\limsup_{t\rightarrow+\infty}\frac{1}{t}\frac{d\varphi}{dt}\geq\lambda
\end{equation}
uniformly along $\gamma$, i.e., it does not depend on the initial direction of the geodesic.
\item $M^n$ has finite fundamental group, and the  weighted volume $\operatorname{vol}_\varphi(M)$ is finite. In particular, the weak maximum principle at infinity for  $\Delta_\varphi$ holds on $(M^n,g)$.
\end{enumerate}
\end{theorem}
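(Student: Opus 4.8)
The plan is to establish the two assertions by independent arguments, the first of which is the substantial one. For part~(1), fix $p$ and a minimizing unit--speed ray $\gamma\colon[0,\infty)\to M^n$, and abbreviate $\varphi(t)=\varphi(\gamma(t))$, $S(t)=S(\gamma(t))$. Restricting \eqref{eqres} to $\gamma'$ gives the pointwise identity $\varphi''(t)=\lambda+\rho\,S(t)-\operatorname{Ric}(\gamma',\gamma')(t)$, and integrating it from $0$ to $r$ yields
\begin{equation*}
\varphi'(r)=\varphi'(0)+\lambda r+\rho\int_0^r S(t)\,dt-\int_0^r\operatorname{Ric}(\gamma',\gamma')(t)\,dt .
\end{equation*}
Since $\rho>0$ and $S\ge 0$, it therefore suffices to produce, for every large $r$, a point $s_r\in[r-1,r]$ with $\varphi'(s_r)\ge\lambda r-K$, where $K$ depends only on $p$; letting $r\to\infty$ then gives \eqref{crepote}, and uniformity in the direction of $\gamma$ follows from the uniformity of $K$. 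To obtain such a bound I would apply the index form inequality on the minimizing segment $\gamma|_{[0,r]}$ with the test function $\phi=\min\{t,1,r-t\}$ and $n-1$ parallel orthonormal fields orthogonal to $\gamma'$, which gives $\int_0^r\phi^2\operatorname{Ric}(\gamma',\gamma')\,dt\le 2(n-1)$.

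The contribution of $\int_0^1$ is harmless because $\operatorname{Ric}$ is bounded on $\overline{B(p,1)}$ by a constant $C_p$ independent of $\gamma$; the delicate term is the tail near $\gamma(r)$, where there is no a priori curvature bound. Here I would use the soliton equation in the opposite direction: on $[r-1,r]$ write $\operatorname{Ric}(\gamma',\gamma')=\lambda+\rho S-\varphi''\ge\lambda-\varphi''$ and integrate $\int_{r-1}^r(r-t)^2\varphi''(t)\,dt$ by parts twice, so that the tail is expressed through $\varphi'(r-1)$, $\varphi(r-1)$ and $\int_{r-1}^r\varphi\,dt$ only. Feeding this back into the index--form estimate together with the displayed first--integral identity, the uncontrolled quantity $\varphi'(r)$ cancels and, after discarding the nonnegative term $\rho\int_0^{r-1}S\,dt$, one is left with
\begin{equation*}
\lambda r\le K+2\int_{r-1}^r\bigl(\varphi(t)-\varphi(r-1)\bigr)\,dt=K+2\int_{r-1}^r\varphi'(s)(r-s)\,ds\le K+\sup_{[r-1,r]}\varphi',
\end{equation*}
with $K=2C_p+2(n-1)+\tfrac{2\lambda}{3}+|\nabla\varphi(p)|$. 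This yields the desired $s_r\in[r-1,r]$ with $\varphi'(s_r)\ge\lambda r-K$, hence $\tfrac1{s_r}\varphi'(s_r)\ge\lambda-K/(r-1)$, proving \eqref{crepote} uniformly.

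For part~(2), by \eqref{eqres} the Bakry--Émery Ricci tensor of the soliton satisfies $\operatorname{Ric}_\varphi:=\operatorname{Ric}+\nabla^2\varphi=(\lambda+\rho S)g\ge\lambda g>0$, since $\lambda>0$, $\rho>0$ and $S\ge0$. By the Bakry--Émery analogue of Myers' theorem on weighted volume (a complete weighted manifold with $\operatorname{Ric}_\varphi\ge\lambda g>0$ has finite weighted volume), $\operatorname{vol}_\varphi(M)=\int_M e^{-\varphi}\,dV<\infty$. Passing to the universal cover $\widetilde M$, the lifted potential $\widetilde\varphi$ again satisfies $\operatorname{Ric}_{\widetilde\varphi}\ge\lambda\widetilde g>0$, so $\operatorname{vol}_{\widetilde\varphi}(\widetilde M)<\infty$; since $\operatorname{vol}_{\widetilde\varphi}(\widetilde M)$ equals $|\pi_1(M)|$ times $\operatorname{vol}_\varphi(M)$ and $0<\operatorname{vol}_\varphi(M)<\infty$, the fundamental group is finite. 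Finally, $\operatorname{vol}_\varphi(M)<\infty$ keeps the weighted volume of geodesic balls bounded, so the Grigor'yan--type criterion for $\varphi$--stochastic completeness applies (compare Lemma~\ref{princwf}) and the weak maximum principle at infinity for $\Delta_\varphi$ holds on $(M^n,g)$.

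The main obstacle is located entirely in part~(1): with no global curvature bound one cannot directly control the boundary term of the index form at the moving endpoint $\gamma(r)$, and the device that resolves it is to use the soliton equation twice with opposite roles --- once to extract the leading $\lambda r$ while discarding the favorable term $\rho S\ge 0$, and once, near $\gamma(r)$, to trade $\operatorname{Ric}(\gamma',\gamma')$ for $\varphi''$ and integrate by parts --- after which $\varphi'(r)$ cancels. Part~(2) is then an immediate consequence of the strict positivity $\operatorname{Ric}_\varphi\ge\lambda g>0$.
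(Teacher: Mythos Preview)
Your argument is correct in both parts, and for part~(2) it coincides with the paper's: from $S\ge0$, $\rho>0$, $\lambda>0$ one has $\operatorname{Ric}_\varphi\ge\lambda g>0$, and then Wei--Wylie's comparison (their Corollary~5.1) gives finite weighted volume and finite fundamental group, whence the weak maximum principle at infinity via the volume--growth criterion.

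For part~(1) you reach the same endpoint as the paper, namely $\sup_{[r-1,r]}\varphi'\ge\lambda r-K$ with $K$ depending only on $p$, but by a more circuitous route. The paper simply observes that $\operatorname{Ric}_\varphi\ge\lambda g$ means $-\operatorname{Ric}(\dot\gamma,\dot\gamma)\le-\lambda+\varphi''$, substitutes this directly into the index--form inequality $\int_0^r\bigl((n-1)\dot\phi^2-\phi^2\operatorname{Ric}(\dot\gamma,\dot\gamma)\bigr)dt\ge0$ with the same test function $\phi$, and then a single integration by parts of $\int_0^r\phi^2\varphi''\,dt$ already yields the inequality. Your detour---first integrating $\varphi''$ to introduce $\varphi'(r)$, then splitting $\int_0^r\operatorname{Ric}$ into the $\phi^2$--weighted piece plus the two boundary corrections, and finally using the soliton equation a second time on $[r-1,r]$ so that the artificially introduced $\varphi'(r)$ cancels---is valid but unnecessary: the cancellation is exactly undoing the first step. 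The paper's version avoids any pointwise curvature bound near $p$ (you need $|\operatorname{Ric}|\le C_p$ on $\overline{B(p,1)}$, which is fine by Hopf--Rinow, but the paper only needs $\sup_{B(p,1)}|\nabla\varphi|$), and it makes transparent that the only input is $\operatorname{Ric}_\varphi\ge\lambda$; this is why the paper can state it as the more general Proposition~\ref{riccihes}.
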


As an application of Theorem~\ref{teocresc2}, we obtain a compactness criteria for a class of gradient shrinking $\rho$-Einstein solitons as follows.

\begin{corollary}\label{Comp-RES}
Let $(M^n,g,\varphi)$ be a geodesically complete gradient shrinking $\rho$-Einstein soliton with nonnegative scalar curvature and $\rho>0$. Suppose there exists a point $p\in M^n$ such that for all $r(x)\geq r_0$, $|\nabla \varphi(x)|\leq \left(\lambda-\varepsilon\right)r(x)+c$, for some positive constants $c$ and $\varepsilon$, with $\varepsilon\leq \lambda$, where $r(x)$ is the distance function from $p$ and $r_0$ is a sufficiently large positive integer. Then, the manifold $M^n$ is compact.
\end{corollary}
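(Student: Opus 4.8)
The plan is to argue by contradiction, playing the hypothesis on $|\nabla\varphi|$ against the uniform lower bound \eqref{crepote} supplied by Theorem~\ref{teocresc2}. Assume $M^n$ is noncompact. Since $(M^n,g)$ is complete and noncompact, a standard argument produces a ray from $p$, i.e. a minimizing unit speed geodesic $\gamma:[0,\infty)\to M^n$ with $\gamma(0)=p$; in particular $r(\gamma(t))=t$ for every $t\geq 0$, which is exactly the normalization used in Theorem~\ref{teocresc2}.

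Next I would estimate the growth of $\varphi$ along this ray from above. By the Cauchy--Schwarz inequality,
\[
\frac{d\varphi}{dt}(t)=\langle\nabla\varphi(\gamma(t)),\gamma'(t)\rangle\leq|\nabla\varphi(\gamma(t))|,
\]
and for $t\geq r_0$ the hypothesis gives $|\nabla\varphi(\gamma(t))|\leq(\lambda-\varepsilon)t+c$. Dividing by $t$ and letting $t\to+\infty$ yields
\[
\limsup_{t\to+\infty}\frac{1}{t}\frac{d\varphi}{dt}(t)\leq\lambda-\varepsilon<\lambda,
\]
because $\varepsilon>0$. On the other hand, the soliton is shrinking with $\rho>0$ and nonnegative scalar curvature, and we have assumed it is noncompact, so Theorem~\ref{teocresc2}(1) applies to $\gamma$ and gives $\limsup_{t\to+\infty}\frac{1}{t}\frac{d\varphi}{dt}\geq\lambda$ along $\gamma$. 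These two inequalities are contradictory, hence $M^n$ cannot be noncompact; being geodesically complete, it is therefore compact.

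The argument is short and I do not anticipate a real obstacle: the only points requiring care are that \eqref{crepote} is an asymptotic statement along rays, so one must invoke the existence of at least one ray emanating from $p$ — available precisely because $M^n$ is noncompact — and one must be sure that the arc-length parametrization of the ray makes $r(\gamma(t))=t$, so that the hypothesis $|\nabla\varphi(x)|\leq(\lambda-\varepsilon)r(x)+c$ translates into the bound $(\lambda-\varepsilon)t+c$ for $t\geq r_0$. Once this bookkeeping is matched with the normalization in Theorem~\ref{teocresc2}, the contradiction is immediate.
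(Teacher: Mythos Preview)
Your proof is correct and follows essentially the same contradiction argument as the paper: assume $M^n$ is noncompact, use the hypothesis to bound $\limsup_{t\to\infty}\tfrac{1}{t}\tfrac{d\varphi}{dt}\leq\lambda-\varepsilon$ along a ray from $p$, and contradict this with the lower bound $\limsup\geq\lambda$ from Theorem~\ref{teocresc2}. Your version is in fact slightly more explicit about the existence of the ray and the identification $r(\gamma(t))=t$, which the paper leaves implicit.
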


The proof of Theorem~\ref{teocresc2} follows from a more general setting that we discuss in Section~\ref{Abpf}, which is motivated by the construction of gradient $\rho$-Einstein solitons that are realized as warped metrics $g=g_B+f^2g_F$ on the product manifold $B^n\times F^m$, see Section~\ref{SecGESWP}. For simplicity, we say that $B^n\times_f F^m$ is a \emph{gradient $\rho$-Einstein soliton warped product} with a potential function $\eta$ and a warping function $f$. The notation $\tilde\psi$ stands for the lift of a smooth function $\psi$ on $B^n$ to  $B^n\times F^m$. The main result of this subject is:

\begin{theorem}\label{rhoonlybase}
Let $B^{n}\times_{f} F^{m}$ be a geodesically complete gradient $\rho$-Einstein soliton warped product with a nonconstant warping function $f$. Suppose that either one
of the following three conditions holds
\begin{enumerate}
\item \label{rhoonlybase-1} $m\neq2,$ or
\item \label{rhoonlybase-1-1} $\rho\neq 1/6,$ or
\item \label{rhoonlybase-2} $f$ is bounded.
\end{enumerate}
Then, the scalar curvature of the fiber $(F^m,g_{F})$ is constant. Moreover, the potential function $\eta=\tilde{\varphi}$ and $\lambda+\rho S_g=\tilde{\Lambda}$, for some smooth functions $\varphi$ and $\Lambda$ on $B^n.$
\end{theorem}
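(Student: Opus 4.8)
The plan is to split the soliton identity $Ric+\nabla^2\eta=(\lambda+\rho S_g)g$ into its three blocks relative to the product structure and to use them in the order mixed, vertical, horizontal. The inputs are the classical warped product formulas: for horizontal $X,Y$ and vertical $V,W$ one has $Ric(X,V)=0$, $Ric(X,Y)=Ric_B(X,Y)-\tfrac mf\nabla^2_B f(X,Y)$ and $Ric(V,W)=Ric_F(V,W)-\big(f\Delta_B f+(m-1)|\nabla_B f|^2\big)g_F(V,W)$, together with the analogous identities for $\nabla^2\eta$; the one that starts the argument is the mixed Hessian, $\nabla^2\eta(X,V)=X(V\eta)-(X\log f)\,V\eta$. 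Two standing facts will be used repeatedly: (i) since $B^n\times\{\mathrm{pt}\}$ is totally geodesic, $B$ is geodesically complete, and since $f$ is nonconstant and positive, $\nabla^2_B f$ cannot vanish identically (a parallel nonzero $\nabla_B f$ makes $f$ affine along a geodesic, hence unbounded below); (ii) consequently an expression of the form $\psi(y)\,T(x)$, with $T$ a not-identically-zero tensor field on $B$, can be a tensor field on $B$ only if $\psi$ is constant.

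First I would invoke the mixed equation $\nabla^2\eta(X,V)=0$. It rewrites as $X\big(V(\eta)/f\big)=0$ for every horizontal $X$, so the fiberwise differential of $\eta$ equals $\tilde f$ times a closed $1$-form on $F$ independent of the base point; integrating over a fiber gives the decomposition $\eta=\tilde f\,\mu+\tilde\varphi$ with $\mu\in C^\infty(F)$ and $\varphi\in C^\infty(B)$. Since $S_g=S_B-\tfrac{2m}{f}\Delta_B f-\tfrac{m(m-1)}{f^2}|\nabla_B f|^2+\tfrac{S_F}{f^2}$, it now suffices to prove that $\mu$ and $S_F$ are constant.

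Next I would substitute this into the vertical equation. Collecting the terms proportional to $g_F$ and tracing over $F$ to remove the unknown conformal factor leaves $Ric_F-\tfrac{S_F}{m}g_F=-\tilde f\big(\nabla^2_F\mu-\tfrac{\Delta_F\mu}{m}g_F\big)$; as the left side is base-independent and $f$ nonconstant, both sides vanish, so $(F^m,g_F)$ is Einstein and $\nabla^2_F\mu=\tfrac{\Delta_F\mu}{m}g_F$. Under hypothesis~\eqref{rhoonlybase-1} this already finishes the proof: $S_F\equiv 0$ if $m=1$, while Schur's lemma gives $S_F$ constant if $m\ge 3$; then $S_g$ is a lift, $\lambda+\rho S_g=\tilde\Lambda$ for some $\Lambda\in C^\infty(B)$, and the horizontal equation $Ric_B+(\mu-\tfrac mf)\nabla^2_B f+\nabla^2_B\varphi=\tilde\Lambda g_B$ together with fact (ii) forces $\mu$ constant.

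The main obstacle is the surface case $m=2$, where "Einstein" is vacuous and $S_F$ need not be constant a priori; this is precisely what hypotheses~\eqref{rhoonlybase-1-1} and \eqref{rhoonlybase-2} are designed to handle. I would argue by contradiction, supposing $\mu$ nonconstant. Separating the fiber dependence in the horizontal equation, recalling that $S_g$ depends on $F$ only through the term $f^{-2}S_F$, forces $\nabla^2_B f=c\,f^{-2}g_B$ for a constant $c\neq 0$ (via fact (i)) together with an affine relation $S_F=\tfrac c\rho\mu+e$; taking the divergence of the concircular equation on $F^2$ then gives $\nabla(\Delta_F\mu)=-S_F\nabla\mu$, so $\Delta_F\mu$ is a quadratic polynomial in $\mu$; feeding $S_F$ and $\Delta_F\mu$ back into the vertical equation yields, at each fixed base point, a polynomial identity valid throughout the range of $\mu$ whose coefficients involve $\rho$, $f$ and $|\nabla_B f|^2$, and under \eqref{rhoonlybase-1-1} this is incompatible with $c\neq0$. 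Under \eqref{rhoonlybase-2} one uses instead the boundedness of $f$ — together with $\nabla^2_B f=c\,f^{-2}g_B$ and completeness of $B$, which pins down the behaviour of $f$ — to reach the same contradiction. Making this separation-of-variables bookkeeping rigorous and resolving the $m=2$ dichotomy is where the real work lies; the remainder is a methodical unwinding of the warped product identities.
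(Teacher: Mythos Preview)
Your outline is correct and follows a genuinely different route from the paper's.

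The paper never isolates the trace-free part of the vertical block. It works instead from the \emph{horizontal} block: differentiating it in a fiber direction gives
\[
V(h)\,\nabla^2_B f \;=\; \frac{\rho}{f^2}\,V(S_F)\,g_B,
\]
and from this single identity (plus the fact that $\nabla_B f$ cannot be parallel) one extracts, under the assumption that $S_F$ is nonconstant, both $\nabla^2_B f=\tfrac{a}{f^2}g_B$ with $a\neq0$ and $a\,\nabla_F h=\rho\,\nabla_F S_F$. The paper then takes the $F$-divergence of the full vertical equation, uses the contracted Bianchi identity and the first integral $|\nabla_B f|^2+2a/f=b$, and separates the $f$-dependence to force first $\rho=1/6$ and then, after tracing and taking a further gradient, $m=2$. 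Hypothesis~\eqref{rhoonlybase-2} is handled exactly as you sketch, by showing that a bounded positive $f$ on a complete base cannot satisfy $\nabla^2_B f=\tfrac{a}{f^2}g_B$ with $a\neq0$.

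Your trace-free vertical equation is the real gain: it gives at once that $(F,g_F)$ is Einstein and $\mu$ is concircular, so case~\eqref{rhoonlybase-1} drops out by Schur without ever touching the horizontal block, whereas the paper must run its full machinery even for $m\neq2$. In the $m=2$ case your concircular relation yields $\nabla_F\Delta_F\mu=-S_F\nabla_F\mu$, an identity the paper does not exploit; combined with $S_F=\tfrac{c}{\rho}\mu+e$ it makes $\Delta_F\mu$ genuinely quadratic in $\mu$, and substitution into the (scalar, since $m=2$) vertical equation produces a nonvanishing $\mu^2$-coefficient proportional to $cf/\rho$. This is already a contradiction for \emph{every} $\rho$, so your method in fact proves the conclusion with none of the three hypotheses --- worth noting when you write it up, although for the theorem as stated either route suffices. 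The paper's approach, by contrast, is more uniform (everything flows from the displayed identity above) and avoids the Einstein/Schur step, at the cost of a longer computation and a nominally weaker stated result.
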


We observe that Theorem~\ref{rhoonlybase} is simpler under the assumption that the base is compact, see Proposition~\ref{compct-casePF}. 

The next question then is, what are the necessary and sufficient conditions for constructing a gradient $\rho$-Einstein soliton warped product? This is the content of Propositions~\ref{condsufrho} and \ref{constrhoe}.

We highlight that Examples~\ref{traceless1}, \ref{traceless2} and~\ref{exemimcp} above mentioned are of gradient $\rho$-Einstein soliton warped products. We establish the sign of the scalar curvature of the fiber of a gradient $\rho$-Einstein soliton warped product which is either shrinking or steady, provided the warping function reaches a minimum, see Corollary~\ref{teofitrwp}. We also establish a compactness criteria for a geodesically complete gradient shrinking $\rho$-Einstein soliton warped product by means of an asymptotic behavior of the potential function, see Corollary~\ref{cricomphowp}.

\section{Lower bound for the scalar curvature}
In this section, we obtain a lower bound for the scalar curvature of a geodesically complete gradient $\rho$-Einstein soliton via a suitable control for the volume growth of geodesic balls and the weak maximum principle at infinity for an appropriate drifted Laplacian.

Let $(M^n,g)$ be a Riemannian manifold with a weighted volume form $e^{-h}d\operatorname{vol}$, for some $h\in C^\infty(M)$. In this context, we use the Bakry-Emery Ricci tensor
\begin{equation*}
Ric_h=Ric+\nabla^2 h
\end{equation*}
and the drifted Laplacian
\begin{equation*}
\Delta_hu=e^h \operatorname{div}\left(e^{-h}\nabla u\right).
\end{equation*}

Let $B(p,R)\subset M^n$ be the metric ball of radius $R>0$ and centered at $p\in M^n$ with boundary $\partial B(p,R)$. We set
\begin{equation*}
\operatorname{vol}_h(B(p,R))=\int_{B(p,R)}\!\!\!\!e^{-h}d\operatorname{vol}\qquad and\qquad \operatorname{vol}_h(\partial B(p,R))=\int_{\partial B(p,R)}\!\!\!\!\!\!e^{-h}d\operatorname{vol}_{n-1},
\end{equation*}
where $e^{-h}d\operatorname{vol}_{n-1}$ stands for the weighted volume form on $\partial B(p,R)$.

In order to obtain a suitable control for the volume growth of geodesic balls, we make use of two classical tools in Riemannian geometry. The first one is the Bochner technique, which we apply to the gradient of the distance function to derive a Riccati inequality. The second one is the first variation of the area formula, which allows us to estimate the weighted volume of geodesic balls in polar coordinates.
\begin{lemma}\label{lemvolex1}
Let $(M^n,g,\varphi)$ be a geodesically complete gradient $\rho$-Einstein soliton with $0<\rho<\frac{1}{2(n-1)}$. Suppose that the hypotheses of Theorem~\ref{ESRBF} are satisfied. Then, for a fixed $r_0>0$ and for all $R\geq r_0,$ there exist constants $A_0$, $B_0$ and $C_0$ that do not depend on $R$, such that
\begin{equation}\label{inelel}
\operatorname{vol}_{\hat\varphi}(B(p,R))\leq  A_0+B_0\int^{R}_{r_0}e^{C_0 t^2}dt,
\end{equation}
where $\hat\varphi=\frac{1}{1-2\rho(n-1)}\varphi$.
\end{lemma}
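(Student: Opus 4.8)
The plan is to derive a Riccati-type differential inequality for the weighted mean curvature of geodesic spheres, integrate it along radial geodesics, and then combine with the first variation of area to obtain the claimed bound. First I would set $h = \hat\varphi = \frac{1}{1-2\rho(n-1)}\varphi$ and note that the coefficient $\frac{1}{1-2\rho(n-1)}$ is positive precisely because $0<\rho<\frac{1}{2(n-1)}$; this is the natural weight for which the soliton equation produces a sign-definite contribution. Starting from the soliton equation \eqref{eqres}, contracting with the unit radial field $\partial_r$ gives $\mathrm{Ric}(\partial_r,\partial_r) + \nabla^2\varphi(\partial_r,\partial_r) = \lambda + \rho S$. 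On the other hand, the Bochner formula applied to the distance function $r$ yields $\partial_r(\Delta r) + |\nabla^2 r|^2 + \mathrm{Ric}(\partial_r,\partial_r) = 0$, and by Cauchy--Schwarz $|\nabla^2 r|^2 \geq \frac{(\Delta r)^2}{n-1}$. Writing $m_h := \Delta_h r = \Delta r - \langle \nabla h, \partial_r\rangle$ for the weighted mean curvature and combining these identities, the $\mathrm{Ric}$ term is eliminated in favor of $\lambda + \rho S - \nabla^2\varphi(\partial_r,\partial_r)$; absorbing the Hessian of $\varphi$ into the weighted derivative produces a Riccati inequality of the shape
\begin{equation*}
\partial_r m_h \leq -\frac{(\Delta r)^2}{n-1} - \lambda - \rho S + (\text{derivative of a weighted quantity}).
\end{equation*}

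The next step is to control the right-hand side using the hypotheses of Theorem \ref{ESRBF}. Under condition (1), $S \geq -Ar^2 - B$ bounds $-\rho S \leq \rho(Ar^2+B)$ from above, and $|\nabla\varphi| \leq Ar+B$ controls the weighted correction term linearly in $r$; under condition (2), $\varphi \geq -Ar^2 - B$ directly controls the weight along the geodesic while $S \geq -A$ bounds $-\rho S$ by a constant. In either case one arrives at an estimate $\partial_r m_h \leq c_1 r^2 + c_2 r + c_3$ (modulo the negative quadratic mean-curvature term, which we simply discard), so integrating from $r_0$ to $t$ gives $m_h(t) \leq c_1' t^3 + c_2' t^2 + c_3' t + c_4'$ along each minimizing geodesic, valid for $t \geq r_0$. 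Then the first variation of area formula, which states $\frac{\partial}{\partial r}\log\bigl(\text{weighted area element}\bigr) = m_h$, lets me integrate once more: the weighted area element of $\partial B(p,t)$ in polar coordinates grows at most like $\exp(C_0 t^2)$ after absorbing lower-order terms (here the cubic term in $m_h$ integrates to a quartic, so one is slightly more careful --- but the statement only asks for $e^{C_0 t^2}$, which suggests that under these hypotheses the cubic term is in fact absent, i.e. the $\rho S r^2$ and $\nabla^2\varphi$ contributions combine to leave only a quadratic; I would double-check the exact bookkeeping here). Finally, integrating the weighted area bound over $t \in [r_0, R]$ in geodesic polar coordinates and adding the (finite) weighted volume of $B(p,r_0)$ gives $\operatorname{vol}_{\hat\varphi}(B(p,R)) \leq A_0 + B_0\int_{r_0}^R e^{C_0 t^2}\,dt$, with constants independent of $R$ since all estimates were uniform in the radial direction.

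The main obstacle I anticipate is the careful bookkeeping in the Riccati step --- specifically, ensuring that the contribution of $\rho S$ and $\nabla^2\varphi(\partial_r,\partial_r)$, when expressed through the weighted mean curvature $m_{\hat\varphi}$ with the specific weight $\hat\varphi = \frac{\varphi}{1-2\rho(n-1)}$, reduces the apparent growth to quadratic rather than cubic in $r$. This is presumably the reason for the precise choice of normalizing constant: the soliton equation gives $\Delta\varphi = n(\lambda+\rho S) - S$, and plugging in $S = \frac{n\lambda - (1-\rho n)\cdot(\text{stuff})}{\cdots}$ relations should make a miraculous cancellation occur. A secondary technical point is handling the distance function's lack of smoothness past the cut locus; the standard remedy (working with smooth upper barriers for $r$, or restricting to the star-shaped domain inside the cut locus and noting the cut locus has measure zero) applies verbatim here and I would invoke it without belaboring the details. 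The completeness hypothesis guarantees that minimizing geodesics of all lengths exist, so the integration is legitimate for all $R \geq r_0$.
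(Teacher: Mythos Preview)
Your overall architecture (Bochner formula, Riccati inequality for the weighted mean curvature, first variation of area) matches the paper, but the proposal contains a real gap at exactly the point you flagged as uncertain: the reduction from cubic to quadratic growth in the exponent.

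You write that you would ``simply discard'' the term $-\frac{(\Delta r)^2}{n-1}$ in the Riccati inequality, and then speculate that the specific choice of weight $\hat\varphi=\frac{\varphi}{1-2\rho(n-1)}$ produces a ``miraculous cancellation'' killing the cubic term. Neither of these is correct. With the weight $\hat\varphi$, the Riccati inequality reads
\[
\dot z_{\hat\varphi}+\frac{z^2}{n-1}\le -\lambda-\rho S-2\rho(n-1)\,\nabla^2\hat\varphi(\partial_r,\partial_r),
\]
and under hypothesis~(1) the right-hand side is genuinely of order $r^2$ (from $-\rho S\le \rho(Ar^2+B)$); the Hessian term integrates to a boundary contribution controlled only linearly by $|\nabla\varphi|\le Ar+B$. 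After one integration you are stuck with
\[
z_{\hat\varphi}(r)+\frac{1}{n-1}\int_{r_0}^{r}z^2\,dt\le \rho A\,r^3+C_1,
\]
and the cubic is unavoidable. If you throw away $\int z^2$ here, a second integration gives $\int z_{\hat\varphi}=O(r^4)$ and hence $e^{Cr^4}$ volume growth, which does not yield the lemma.

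The paper's resolution is to \emph{keep} the quadratic term and exploit it. Writing $z=z_{\hat\varphi}+\hat\varphi'$ and applying Jensen's inequality,
\[
\int_{r_0}^{r}z^2\,dt\ge \frac{1}{r-r_0}\Big(\int_{r_0}^{r}(z_{\hat\varphi}+\hat\varphi')\,dt\Big)^2,
\]
one then runs a contradiction argument: define $w(r)$ to be (roughly) an $O(r^2)$ quantity minus $\int_{r_0}^{r}z_{\hat\varphi}\,dt$, check $w(r_0)>0$, and show that at the first zero $\hat r$ of $w$ the Jensen lower bound forces $z_{\hat\varphi}(\hat r)<0$, hence $w'(\hat r)>0$, contradicting the choice of $\hat r$. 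This yields $\int_{r_0}^{r}z_{\hat\varphi}\,dt=O(r^2)$ directly, and the rest of your outline (first variation of area, integration in polar coordinates) goes through. The choice of normalizing constant in $\hat\varphi$ is not made for any cancellation in this lemma; it is dictated by the drifted Laplacian inequality for $S$ used later in the proof of Theorem~\ref{ESRBF}.
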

\begin{proof}
Let $B(p,R)\subset M^n$ be a geodesic ball of radius $R>0$ and centered at $p\in M^n$. Consider the distance function $r(x)$ from $p$, and apply the classical Bochner formula for $r(x)$ to get
\begin{equation*}
0=\left|\nabla^{2} r\right|^{2}+R i c(\partial r, \partial r)+g\left(\nabla \Delta r, \partial r\right) .
\end{equation*}
By Schwarz inequality
\begin{equation*}
0 \geq \frac{(\Delta r)^{2}}{n-1}+R i c(\partial r, \partial r)+g\left(\nabla \Delta r, \partial r\right).
\end{equation*}
Define $\hat\varphi=\frac{1}{1-2\rho(n-1)}\varphi$, $z=\Delta r$ and $z_{\hat\varphi}=z-\langle \nabla\hat\varphi,\partial r\rangle$. Then $\dot z=g(\nabla\Delta r,\partial r)$ and  $\dot z_{\hat\varphi}=\dot z-\nabla^2\hat\varphi(\partial r,\partial r)$. Thus, the previous inequality becomes
\begin{eqnarray*}
\dot z_{\hat\varphi}\leq -\frac{z^2}{n-1}-Ric_{\hat\varphi}(\partial r,\partial r),
\end{eqnarray*}
which is known as a Riccati inequality. Use the gradient $\rho$-Einstein soliton equation~\eqref{eqres} to obtain
\begin{eqnarray*}
\dot z_{\hat\varphi}+\frac{z^2}{n-1}\leq-(\lambda+\rho S)-2\rho(n-1)\nabla^2\hat\varphi(\partial r,\partial r).
\end{eqnarray*}
Now, suppose that the hypotheses of the first item of Theorem~\ref{ESRBF} are satisfied, then
\begin{eqnarray*}
\dot z_{\hat\varphi}+\frac{z^2}{n-1}\leq-\lambda+\rho A r^2+B\rho-2\rho(n-1)\nabla^2\hat\varphi(\partial r,\partial r).
\end{eqnarray*}
Integrating the previous inequality from a fixed $r_0>0$ to $r>r_0$ and using the assumption that $|\nabla\varphi|\leq Ar+B$ gives
\begin{eqnarray*}
 z_{\hat\varphi}+\frac{1}{n-1}\int_{r_0}^r z^2(t)dt\leq \rho A r^3+C_1,
\end{eqnarray*}
for some positive constant $C_1$ (which depends only on $r_0$) or, equivalently,
\begin{equation*}
 z_{\hat\varphi}+\frac{1}{n-1}\int_{r_0}^r (z_{\hat\varphi}+\hat\varphi'(t))^2dt\leq \rho A r^3+C_1.
\end{equation*}
On the other hand, Jensen's inequality implies
\begin{equation*}
    \int_{r_0}^r (z_{\hat\varphi}+\hat\varphi'(t))^2dt\geq \frac{1}{r-r_0}\left(\int_{r_0}^r(z_{\hat\varphi}+\hat\varphi'(t))dt\right)^2.
\end{equation*}
Combining the latter inequalities, we have
\begin{equation}\label{chav}
 z_{\hat\varphi}+\frac{1}{r(n-1)}\left(\int_{r_0}^r(z_{\hat\varphi}+\hat\varphi'(t))dt\right)^2\leq \rho A r^3+C_1.
\end{equation}
Besides, again from assumption $|\nabla\varphi|\leq Ar+B$ and the fundamental theorem of calculus, we get
\begin{equation}\label{potenes}
    |\varphi(x)-\varphi(p)|\leq C_2+Ar^2(x),
\end{equation}
for some constant $C_2>0$. Now we claim that 
\begin{equation}\label{desilpe}
\int_{r_0}^r z_{\hat\varphi}(t)dt\leq \beta(C_2+2C_3)+\sqrt{2C_1(n-1)r}+\beta A r^2+\sqrt{(n-1)\rho A}r^{2},
\end{equation}
for any $r\geq r_0$, where $C_3=\max\{0,\max_{x\in B(p,r_0)}\varphi(x)\}$ and $\beta=\frac{1}{1-2\rho(n-1)}$. Indeed, we define
\begin{equation}\label{wmes}
    w(r):=\beta(C_2+2C_3)+\sqrt{2C_1(n-1)r}+\beta A r^2+\sqrt{(n-1)\rho A}r^{2}-\int_{r_0}^r z_{\hat\varphi}(t)dt,
\end{equation}
and then $w(r_0)>0$. We assume that $w$ is not always positive for $r\geq r_0$ so that we can take the first number $\hat{r}>r_0$ such that $w(\hat{r})=0$. Thus,
\begin{equation*}
    \beta(C_2+2C_3)+\sqrt{2C_1(n-1)\hat{r}}+\beta A \hat{r}^2+\sqrt{(n-1)\rho A}\hat{r}^{2}=\int_{r_0}^{\hat{r}} z_{\hat\varphi}(t)dt.
\end{equation*}
By~\eqref{potenes} we get $\hat\varphi(\hat{r})-\hat\varphi(r_0)+\beta(C_2+2C_3)+\beta A \hat{r}^2\geq 0$, and then
\begin{align*}
&\frac{1}{\hat{r}(n-1)}\left(\hat\varphi(\hat{r})-\hat\varphi(r_0)+\int_{r_0}^{\hat{r}} z_{\hat\varphi}(t)dt\right)^2\\
&=\frac{1}{\hat{r}(n-1)}\!\left(\hat\varphi(\hat{r})\!-\!\hat\varphi(r_0)\!+\!\beta(C_2+2C_3)\!+\!\sqrt{2C_1(n-1)\hat{r}}\!+\!\beta A \hat{r}^2\!+\!\sqrt{(n-1)\rho A}\hat{r}^{2}\right)^2\\
&\geq \frac{1}{\hat{r}(n-1)}\left(\sqrt{2C_1(n-1)\hat{r}}+\sqrt{(n-1)\rho A}\hat{r}^{2}\right)^2\geq 2C_1+\rho A \hat{r}^3.
\end{align*}
Combining the previous inequality with~\eqref{chav} one has $z_{\tilde\varphi}(\hat{r})\leq -C_1<0$. Taking the derivative in~\eqref{wmes}, we get $w'(\hat{r})>0$. Consequently, there exists $\varepsilon>0$ such that $w(r)< w(\hat{r})=0$ for all $r\in (\hat{r}-\varepsilon,\hat{r})$, this contradicts the choice of $\hat{r}$ and proves our claim.

Now, consider the annular set $B(p,r_0,R)=B(p,R)\setminus B(p,r_0)$, and note that
\begin{equation*}
\operatorname{vol}_{\hat\varphi}(B(p,R))=\operatorname{vol}_{\hat\varphi}(B(p,r_0))+\operatorname{vol}_{\hat\varphi}(B(p,r_0,R)).
\end{equation*}

Next, define $A_0=\operatorname{vol}_{\hat\varphi}(B(p,r_0))$ so that it is enough to control $\operatorname{vol}_{\hat\varphi}(B(p,r_0,R))$. In order to do so, we can use exponential coordinates around $p$ and we can write $d\operatorname{vol}=\mathcal{A}(r,\theta)dr\wedge d\theta_{n-1}$, where $d\theta_{n-1}$ is the standard volume element on the unit sphere $S^{n-1}$. Define $\mathcal{A}_{\hat\varphi}(r,\theta)=e^{-\hat\varphi}\mathcal{A}(r,\theta)$, and observe that by the first variation of the area formula, one has
\begin{equation*}
\frac{\mathcal{A}_{\hat\varphi}(r,\theta)}{\mathcal{A}_{\hat\varphi}(r_0,\theta)}=e^{\int_{r_0}^{r}z_{\hat\varphi}(t,\theta)dt}
\end{equation*}
for $r\geq r_0>0$, see, e.g., Zhu~\cite{zhu1997comparison} or Wei and Wylie~\cite{wei2009comparison}. So, \eqref{desilpe} implies that
\begin{eqnarray*}
\mathcal{A}_{\hat\varphi}(r,\theta)\leq  \mathcal{A}_{\hat\varphi}(r_0,\theta)e^{ C_0 r^2+B_1},
\end{eqnarray*}
 for some positive constants $C_0$ and $B_1$ (that depend only on $r_0$). Whence
\begin{eqnarray*}
\operatorname{vol}_{\hat\varphi}(B(p,r_0,R))&=&\int_{r_0}^R\int_{S^{n-1}}\mathcal{A}_{\hat\varphi}(r,\theta)d\theta dr\\
&\leq& \int_{r_0}^R\int_{S^{n-1}}\mathcal{A}_{\hat\varphi}(r_0,\theta)e^{ C_0 r^2+B_1}d\theta dr\\
&=& A_{\hat\varphi}(r_0)\int_{r_0}^R e^{ C_0r^2+B_1} dr,
\end{eqnarray*}
where
\begin{equation*}
 A_{\hat\varphi}(r_0)=\int_{S^{n-1}}\mathcal{A}_{\hat\varphi}(r_0,\theta)d\theta.
\end{equation*}
This is enough to conclude the proof of the lemma under the hypotheses of the first item of Theorem~\ref{ESRBF}. Under the hypotheses of the second item of Theorem~\ref{ESRBF}, the proof is simpler and analogous to the first part.
\end{proof}

We now guarantee the validity of the weak maximum principle at infinity for an appropriate drifted Laplacian on a class of gradient $\rho$-Einstein solitons. Recall that the weak maximum principle at infinity for the drifted Laplacian $\Delta_{h}$ holds on $(M^n,g)$ if given a $C^2$ function $u:M^n\rightarrow\mathbb{R}$ with $\sup_M u=u^*<+\infty,$ there exists a sequence $\{x_k\}$ in $M^n$ such that, for all $k\in \mathbb{N},$
\begin{equation*}
(i)\; u(x_k) > u^*-\frac{1}{k}\quad\text{and}\quad (ii)\;\Delta_{h}u(x_k)<\frac{1}{k}.
\end{equation*}

We know from Pigola, Rigoli and Setti~\cite{pigola2005maximum} that the weak maximum principle at infinity is valid for the drifted Laplacian $\Delta_\varphi$ on any geodesically complete gradient Ricci soliton with potential function $\varphi$. Indeed, as has already been observed by Pigola, Rimoldi and Setti~\cite{pigola2011remarks}, one can use the weighted volume growth by Wei and Wylie~\cite[Theorem~4]{wei2009comparison} to get such a principle from \cite[Chapter~3]{pigola2005maximum} or, alternatively, see \cite[Corollary~10]{pigola2011remarks}. Our case follows as in the previous two papers by using Lemma~\ref{lemvolex1}. For clarity, we give the proof here.

\begin{lemma}\label{princwf}
Let $(M^n,g,\varphi)$ be a geodesically complete gradient $\rho$-Einstein soliton with $0<\rho<\frac{1}{2(n-1)}$. Suppose that the hypotheses of Theorem~\ref{ESRBF} are satisfied. Then, the weak maximum principle at infinity for the drifted Laplacian $\Delta_{\hat\varphi}$ holds on $(M^n,g)$, where $\hat\varphi=\frac{1}{1-2\rho(n-1)}\varphi$.
\end{lemma}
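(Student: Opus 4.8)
The plan is to derive the weak maximum principle at infinity for $\Delta_{\hat\varphi}$ from the volume growth estimate of Lemma~\ref{lemvolex1} together with the abstract criterion in Pigola--Rigoli--Setti~\cite{pigola2005maximum} (equivalently Pigola--Rimoldi--Setti~\cite{pigola2011remarks}). The key point is that the abstract theory reduces the validity of the weak maximum principle to a growth condition on the weighted volume of geodesic balls: roughly, if there is a positive nondecreasing function $G$ on $[0,\infty)$ with $1/\sqrt{G}\notin L^1(+\infty)$ and such that $\operatorname{vol}_{\hat\varphi}(B(p,R))\leq G(R)$ (up to constants) for all large $R$, then $\Delta_{\hat\varphi}$ satisfies the weak maximum principle at infinity on $(M^n,g)$.

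Concretely, I would proceed as follows. First, invoke Lemma~\ref{lemvolex1} to obtain constants $A_0,B_0,C_0$ with
\begin{equation*}
\operatorname{vol}_{\hat\varphi}(B(p,R))\leq A_0+B_0\int_{r_0}^{R}e^{C_0t^2}\,dt
\end{equation*}
for all $R\geq r_0$. Next, estimate the right-hand side: since $\int_{r_0}^{R}e^{C_0t^2}\,dt\leq \frac{1}{2C_0 r_0}\big(e^{C_0R^2}-e^{C_0r_0^2}\big)$ for $R\geq r_0$, one gets $\operatorname{vol}_{\hat\varphi}(B(p,R))\leq a\,e^{C_0R^2}$ for a suitable constant $a>0$ and all $R\geq r_0$; enlarging $a$ we may assume the bound holds for all $R\geq 0$. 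Then set $G(R):=a\,e^{C_0R^2}$ (or, to match the hypotheses in~\cite{pigola2005maximum} verbatim, $G(R):=a\,e^{C_0R^2}+c_0$ to make it strictly positive and nondecreasing on all of $[0,\infty)$). One checks that $G$ is positive, smooth, nondecreasing, and that
\begin{equation*}
\int^{\infty}\frac{dR}{\sqrt{G(R)}}=\int^{\infty}\frac{dR}{\sqrt{a}\;e^{C_0R^2/2}}<+\infty,
\end{equation*}
so the \emph{divergence} condition $1/\sqrt{G}\notin L^1(+\infty)$ required by the criterion \textbf{fails}. Hence a direct appeal to the volume criterion in the stated form does not work, and the correct route is the sharper one: the relevant criterion in \cite{pigola2011remarks} (see Corollary~10 there and \cite[Chapter~3]{pigola2005maximum}) only needs $\log\operatorname{vol}_{\hat\varphi}(B(p,R))=o(R^2)$ to fail to be enough, so one uses instead the exact threshold form of the criterion, which asks that $\liminf_{R\to\infty}\frac{\log\operatorname{vol}_{\hat\varphi}(B(p,R))}{R^{2}}<+\infty$; this is exactly what the bound $\operatorname{vol}_{\hat\varphi}(B(p,R))\leq a\,e^{C_0R^2}$ gives, with the $\liminf$ bounded by $C_0$. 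Applying that version of the criterion to the weighted manifold $(M^n,g,e^{-\hat\varphi}d\operatorname{vol})$ yields that $\Delta_{\hat\varphi}$ satisfies the weak maximum principle at infinity on $(M^n,g)$, which is the assertion of the lemma.

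Thus the proof is essentially: (i) quote Lemma~\ref{lemvolex1}; (ii) massage the integral bound into the clean exponential-quadratic form $\operatorname{vol}_{\hat\varphi}(B(p,R))\leq a\,e^{C_0R^2}$; (iii) verify the quadratic-exponential volume growth hypothesis of the Pigola--Rigoli--Setti / Pigola--Rimoldi--Setti criterion for the drifted Laplacian; (iv) conclude. The main obstacle, and the step deserving the most care, is (iii): one must cite the correct form of the abstract weak-maximum-principle theorem — the one whose hypothesis is precisely quadratic-exponential weighted volume growth (equivalently $\operatorname{vol}_{\hat\varphi}(B(p,R))\leq e^{cR^2}$ for large $R$), as used for gradient Ricci solitons in~\cite{pigola2011remarks} — rather than the weaker polynomial-type volume criterion, since only the former matches the growth we actually control. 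Everything else is routine, and I would remark that the argument is identical to the one in~\cite{pigola2011remarks} for gradient Ricci solitons, with Lemma~\ref{lemvolex1} playing the role of the Wei--Wylie weighted volume comparison~\cite{wei2009comparison}.
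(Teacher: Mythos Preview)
Your approach is essentially the paper's: feed the weighted volume estimate of Lemma~\ref{lemvolex1} into the abstract Pigola--Rigoli--Setti criterion and conclude. The paper does this in two lines, verifying directly that $R/\log\operatorname{vol}_{\hat\varphi}(B(p,R))\notin L^{1}(+\infty)$ (since the quotient behaves like $c/R$ at infinity) and then citing Theorem~9 of~\cite{pigola2011remarks}.

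Two small points are worth tightening. First, the false start with the $1/\sqrt{G}\notin L^{1}$ criterion is irrelevant here and should simply be dropped. Second, the hypothesis you ultimately invoke, $\liminf_{R\to\infty}\log\operatorname{vol}_{\hat\varphi}(B(p,R))/R^{2}<+\infty$, is the assumption of Theorem~12 in~\cite{pigola2011remarks} (an a~priori boundedness result, used later in the proof of Theorem~\ref{ESRBF}), not the weak-maximum-principle criterion itself; the correct citation is Theorem~9 there, whose hypothesis is the non-integrability condition above. Your uniform bound $\operatorname{vol}_{\hat\varphi}(B(p,R))\leq a\,e^{C_{0}R^{2}}$ does give $R/\log\operatorname{vol}_{\hat\varphi}(B(p,R))\geq c/R$ and hence the required non-integrability, so the substance of your argument is fine once the citation is corrected.
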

\begin{proof}
Lemma~\ref{lemvolex1} implies that
\begin{equation*}
\frac{R}{\log\, \operatorname{vol}_{\hat\varphi}(B(p,R))}\notin L^1(+\infty).
\end{equation*}
Indeed, it is enough to note that $\frac{R}{\log\,( A_0+B_0\int^{R}_{r_0}e^{C_0 t^2}dt)}= o\left(\frac{1}{R}\right)$ as $R\rightarrow+\infty$.
So, by Theorem~9 in~\cite{pigola2011remarks} (or from~\cite[Chapter~3]{pigola2005maximum}) the weak maximum principle at infinity is valid for  $\Delta_{\hat\varphi}$ on $(M^n,g).$
\end{proof}

We conclude this section by proving our first main result.
\subsection{Proof of Theorem~\ref{ESRBF}}
\begin{proof}
We know from~\cite{catino2016gradient} that for any gradient $\rho$-Einstein soliton it is true that
\begin{eqnarray*}
\left( \rho(1-n)+\frac{1}{2}\right)\Delta S-\frac{1}{2}g(\nabla S,\nabla \varphi)=\lambda S+\rho S^2-|Ric|^2.
\end{eqnarray*}
Since $|Ric|^2\geq \frac{S^2}{n}$, we get
\begin{equation*}\label{proteo03}
\Delta_{\hat{\varphi}}S\leq \xi_1 S+\xi_2 S^2,
\end{equation*}
where
\begin{eqnarray*}
\hat{\varphi}=\frac{1}{1-2\rho(n-1)}\varphi,\quad \xi_1=\frac{2\lambda}{1-2\rho(n-1)}\quad\hbox{and}\quad \xi_2=\frac{2(\rho-\frac{1}{n})}{1-2\rho(n-1)}.
\end{eqnarray*}
Considering the function $u(x)=\max\{-S(x),0\}$, we have 
\begin{equation*}
\Delta_{\hat\varphi}u\geq \xi_1 u-\xi_2 u^2.
\end{equation*}
Assume the scalar curvature is not necessarily bounded below. Since \eqref{inelel} is valid, by straightforward computation
\begin{equation*}
   \liminf_{R\rightarrow +\infty}{\frac{\log{\operatorname{vol}_{\hat\varphi}(B(p,R))}}{R^2}}<+\infty,
\end{equation*}
so that we can apply Theorem 12 in \cite{pigola2011remarks} from which $u$ is bounded above.
Setting $u^*=\sup_M u<+\infty$, by Lemma~\ref{princwf} we can apply the weak maximum principle at infinity for $\Delta_{\hat\varphi}$ to obtain
\begin{equation*}
0\geq \xi_1 u^*-\xi_2 (u^*)^2,
\end{equation*}
whence
\begin{equation*}
 u\leq\max\left\{0, -\frac{\xi_1}{\xi_2}\right\}=\max\left\{0,\frac{\lambda n}{1-\rho n}\right\}.
\end{equation*}
Therefore
\begin{equation*}
S(x)\geq \min \left\{0,\frac{\lambda n}{1-\rho n}\right\}. 
\end{equation*}
This finishes the proof.
\end{proof}

\begin{remark}
Notice that the hypothesis of scalar curvature bounded below implies that $Ric_{\varphi}$ is also bounded below, and then one can use the $\varphi$-weighed volume growth of geodesic balls by Wei and Wylie~\cite{wei2009comparison} to guarantee the validity of the weak maximum principle at infinity for $\Delta_\varphi$. However, we emphasize that the proof of Theorem~\ref{ESRBF} follows from the $\hat\varphi$-weighed volume growth of geodesic balls together with the weak maximum principle at infinity for $\Delta_{\hat{\varphi}}$, where $\hat{\varphi}=\frac{1}{1-2\rho(n-1)}\varphi$. Owing to this, we need to have some hypothesis about the potential function.
\end{remark}

\section{Gradient Einstein soliton warped products}\label{SecGESWP}
Given two Riemannian manifolds $(B^n,g_B)$ and $(F^m,g_F)$ as well as a positive smooth warping function $f$ on $B^n$, let us consider on the product manifold $B^n\times F^m$ the warped metric
\begin{equation*}
g=\pi_1^*g_B+(f\circ \pi_1)^2\pi^*_2g_F, 
\end{equation*}
where $\pi_1$ and $\pi_2$ are the natural projections on $B^n$ and $F^m$, respectively. Under these conditions, the product manifold is called the \emph{warped product} of $B^n$ and $F^m$. The notation $\tilde\varphi=\varphi\circ\pi_1$ stands for the lift of a smooth function $\varphi$ on $B^n$ to $B^n\times F^m$. If $f$ is a constant, then $(B^n\times F^m,g)$ is a \emph{standard Riemannian product}.

Here we study gradient $\rho$-Einstein solitons that are realized as warped metrics $g$ on $B^n\times F^m$. For simplicity, we say that $B^n\times_f F^m$ is a \emph{gradient $\rho$-Einstein soliton warped product} with a potential function $\eta$, and $g$ is a \emph{gradient $\rho$-Einstein soliton warped metric}. We denote by $Ric_g$ the Ricci tensor of the warped metric $g$, while $Ric_B$ and $Ric_F$ are the Ricci tensors of $g_B$ and $g_F$, respectively. We shall use similar notations for the geometric objects of $g$, $g_B$ and $g_F$.

We recall that a warped product is a geodesically complete manifold for all warping functions if and only if both base and fiber are geodesically complete manifolds, see Bishop and O'Neill~\cite[Lemma~7.2]{bishop1969manifolds}. 

It is known that the potential function of a gradient Ricci soliton warped product with a nonconstant warping function depends only on the base, see Borges and Tenenblat~\cite[Corollary~2.2]{borges2017riccim}. Moreover, the scalar curvature of the fiber is constant, see Feitosa, Freitas Filho and Gomes~\cite{feitosa2017construction}. Notice that our Theorem~\ref{rhoonlybase} is a similar result for a gradient $\rho$-Einstein soliton warped product with a nonconstant warping function. 

We begin with the case of a compact base.  In this case, the idea of the proof is analogous to Borges and Tenenblat's result or, alternatively, see Lemma~4.1 in the paper by the second author in joint work with Marrocos and Ribeiro~\cite{gomes2021note}. 
\begin{proposition}\label{compct-casePF}
Let $B^{n} \times_{f} F^{m}$ be a gradient $\rho$-Einstein soliton warped product with a nonconstant warping function and compact base. Then, the scalar curvature of the fiber is constant. Moreover, the potential function $\eta=\tilde{\varphi}$ and $\lambda+\rho S_g=\tilde{\Lambda}$ for some smooth functions $\varphi$ and $\Lambda$ on $B^n$.
\end{proposition}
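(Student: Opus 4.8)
The plan is to analyze the warped product equations for a gradient $\rho$-Einstein soliton and extract the desired rigidity from the compactness of the base. First I would write down the components of the soliton equation~\eqref{eqres} relative to the warped product $g = g_B + f^2 g_F$. Using the standard O'Neill-type formulas for the Ricci tensor and Hessian of a warped metric (for a lift $\tilde\varphi$ of a function on $B$ one has $\nabla^2_g\tilde\varphi(X,Y)=\nabla^2_B\varphi(X,Y)$ for $X,Y$ tangent to $B$, and $\nabla^2_g\tilde\varphi(V,W)=\frac{f\langle\nabla_B f,\nabla_B\varphi\rangle}{1}\, g_F(V,W)$-type terms for $V,W$ tangent to $F$), the soliton equation decomposes into a base part, a mixed part, and a fiber part. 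The fiber part reads, schematically,
\begin{equation*}
Ric_F - \Big(f\Delta_B f + (m-1)|\nabla_B f|^2\Big) g_F + \nabla^2_g\eta\big|_F = (\lambda+\rho S_g)\, f^2 g_F .
\end{equation*}

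Next I would examine the mixed ($B$-$F$) components. For a warped product the mixed Ricci components vanish, and the mixed Hessian of $\eta$ vanishes precisely when $\partial_V\partial_X\eta - (\text{connection terms})=0$; carrying this out shows $\eta$ can depend on the fiber only in a very restricted way, and combined with the fiber equation one deduces that $\eta$ is (up to the obvious structure) a lift $\tilde\varphi$ of a function on the base, and that $\lambda + \rho S_g$ is likewise a lift $\tilde\Lambda$ — this is where I would mirror the Borges--Tenenblat argument (or Lemma~4.1 of~\cite{gomes2021note}). Then the fiber equation becomes $Ric_F = \mu(x)\, g_F$ for a function $\mu$ that a priori depends on the base point $x\in B^n$, where $\mu(x) = f\Delta_B f + (m-1)|\nabla_B f|^2 + f^2\tilde\Lambda - f\langle\nabla_B f,\nabla_B\varphi\rangle$ (up to sign conventions I would pin down in the actual computation). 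Tracing over the fiber gives $S_F = m\,\mu(x)$, so $S_F$ is constant on each fiber but possibly varies with $x$; but $S_F$ is intrinsically a function on $F^m$ alone, hence it is a genuine constant, and therefore $\mu$ is constant.

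The role of compactness of the base enters in the final step: from $\mu\equiv$ const we get a second-order PDE on $B^n$ for $f$ (and a relation tying $\varphi$, $\Lambda$, $f$ together); the constancy of $S_F$ forces $\mu$ constant directly, so actually the main content is showing $\eta$ and $\lambda+\rho S_g$ descend to the base. Here compactness is used exactly as in Borges--Tenenblat: the obstruction to $\eta$ being a lift is a function on $F$ that must satisfy, after integrating an appropriate identity over the compact base $B^n$ (using the divergence theorem, $\int_B \Delta_B(\cdot)=0$), a homogeneous condition forcing it to be constant, which then gets absorbed. I expect the main obstacle to be bookkeeping the mixed-Hessian identity carefully enough to conclude $\partial\eta/\partial(\text{fiber})$ has the required form, and then invoking compactness of $B^n$ at the right moment to kill the fiber-dependent ambiguity; the rest is the standard warped-product curvature computation. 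Once $\eta=\tilde\varphi$ and $\lambda+\rho S_g=\tilde\Lambda$ are established, the constancy of $S_F=m\mu$ is immediate since $\mu$ is then a function on $B^n$ alone while $S_F$ is a function on $F^m$ alone, forcing both to be constant.
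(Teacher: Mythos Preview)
Your outline has the right ingredients --- the warped-product decomposition, the mixed Hessian condition, and the use of compactness via $\int_B\Delta_B(\cdot)=0$ --- and it matches the paper's approach in spirit. However, the logical order is inverted and one key step is misattributed. From the mixed component one obtains only $\eta=\varphi+fh$ with $\varphi\in C^\infty(B)$ and $h\in C^\infty(F)$; one does \emph{not} yet get $\eta=\tilde\varphi$. The identity to differentiate in a fiber direction $V$ is the \emph{base} component of the soliton equation (not the fiber one): this yields
\[
V(h)\,\nabla^2_B f \;=\; \rho\,\frac{V(S_F)}{f^2}\,g_B,
\]
because in the $\rho$-Einstein case the right-hand side $\lambda+\rho S_g$ contains the term $\rho S_F/f^2$ and is not a priori a base function. (This is the extra wrinkle compared with the Ricci-soliton case of Borges--Tenenblat, where the right-hand side is the constant $\lambda$ and one gets $V(h)\nabla^2_Bf=0$ immediately.) Tracing and integrating over the compact base gives $V(S_F)\int_B f^{-2}\,d\mathrm{vol}_B=0$, hence $S_F$ is constant \emph{first}; only then does the displayed identity force $V(h)=0$ (using that $\nabla^2_Bf\not\equiv0$ for a nonconstant $f$ on a compact manifold), whence $\eta=\tilde\varphi$ and $\lambda+\rho S_g=\tilde\Lambda$.

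Your concluding ``separation of variables'' argument for the constancy of $S_F$ --- namely $S_F=m\mu(x)$ with $S_F$ a function on $F$ and $\mu$ a function on $B$ --- presupposes $\eta=\tilde\varphi$, which in the actual argument is obtained only \emph{after} $S_F$ is known to be constant; as written that step is circular. Once you reorder as above, that final paragraph becomes redundant.
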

\begin{proof}
In order to be able to apply this proof in the more general case of geodesically complete gradient  $\rho$-Einstein soliton warped products, we shall only use the compactness assumption  in the last part of the proof.

Let $g$ be a gradient $\rho$-Einstein soliton warped metric on $B^{n} \times F^{m}$, then 
\begin{equation}\label{eqle}
Ric_g+\nabla^2_g\eta=(\rho S_g+\lambda)g,
\end{equation}
for some constants $\rho\neq0$ and $\lambda$. So, by using Lemmas~7.3 and 7.4 due to Bishop and O'Neill~\cite{bishop1969manifolds}, we obtain
\begin{equation*}
0=\left(\nabla_g^{2} \eta\right)(X, V)=X(V(\eta))-\left(\nabla_{X} V\right)(\eta)=X(V(\eta))-\frac{X(f)}{f} V(\eta),
\end{equation*}
for all $X, V$ horizontal and vertical vector fields, respectively. This nullity implies
\begin{equation*}
X\left(V\left(\frac{\eta}{f}\right)\right)=X\left(\frac{V(\eta)}{f}\right)=\frac{1}{f}\left[X(V(\eta))-\frac{X(f)}{f} V(\eta)\right]=0.
\end{equation*}
Therefore, the function $V\left(\frac{\eta}{f}\right)$ depends only on the fiber $F^m$. Thus, without loss of generality, we can write $\eta=\varphi+f h$, for some smooth functions $\varphi$ on $B^n$ and $h$ on $F^m$, which has been the main idea by Borges and Tenenblat. 

We now observe that, for horizontal vector fields, equation~\eqref{eqle} becomes
\begin{equation}\label{EqPonBase}
Ric_B(X,Y)\!-\!\frac{m}{f}\nabla_B^2f(X,Y)\!+\!\nabla_B^2\varphi(X,Y)\!+\!h\nabla_B^2f(X,Y)=(\lambda\!+\!\rho S_g)g_B(X,Y)
\end{equation}
which follows from Lemma~7.4 in~\cite{bishop1969manifolds} or, alternatively, \cite[Corollary~43]{o1983semi}. This lemma also implies
\begin{equation}\label{eq05rho}
S_g=S_{B}-\frac{2m}{f}\Delta_{B}f+\frac{S_{F}}{f^2}-m(m-1)\frac{|\nabla_{B} f|^2}{f^2}.
\end{equation}
From equations~\eqref{EqPonBase} and \eqref{eq05rho}, we get 
\begin{equation}\label{eqpepa}
V(h)\nabla^2_Bf(X,Y)=\rho V\left(\frac{S_F}{f^2}\right)g_B(X,Y).
\end{equation}
Taking trace on both sides of \eqref{eqpepa} gives
\begin{equation*}
V(h)\Delta_B f=\rho \frac{V(S_F)}{f^2}n.
\end{equation*}
Since $B^n$ is compact, Stokes' theorem implies $V(S_F)=0$, and then $S_F$ is constant. Now, using that $f$ is nonconstant, from \eqref{eqpepa} we obtain $V(h)=0$, i.e., $h$ is constant. We conclude that $\lambda+\rho S_g=\tilde{\Lambda}$ and $\eta=\tilde{\varphi}$ for some functions $\varphi,\Lambda\in C^\infty(B^n)$. 
\end{proof}

The proof of Proposition~\ref{compct-casePF} has the advantage of allowing the generalization to geodesically complete gradient $\rho$-Einstein soliton warped products $B^{n}\times_{f} F^{m}$ with nonconstant warping function $f$. For this, we first observe that $\nabla f$ cannot be parallel, see Proposition~3.6 in~\cite{borges2017riccim}. In particular, if the scalar curvature of the fiber is constant, then we obtain immediately from~\eqref{eqpepa} the same result of Proposition~\ref{compct-casePF}. 

We are now ready to prove the main result of this section.

\subsection{Proof of Theorem~\ref{rhoonlybase}}

\begin{proof}
Here, we shall follow the notations and the results obtained as in Proposition~\ref{compct-casePF}. The proof requires a more careful analysis of equation~\eqref{eqpepa}.

We start by noting that the scalar curvature of the fiber is nonconstant if and only if $h$ is nonconstant, which follows immediately of~\eqref{eqpepa} and the important fact that $\nabla f$ cannot be parallel, see~\cite[Proposition~3.6]{borges2017riccim}. 

Now we claim that if the scalar curvature of the fiber is nonconstant, then the following properties hold:
\begin{itemize}
\item[(i)] $\nabla^2_B f=\frac{a}{f^2}g_B$, for some constant $a\neq 0$;
\item[(ii)] $\rho= \frac{1}{6}$ and $m=2.$
\end{itemize}
Indeed, take a horizontal vector field $X$ such that $g_B(X,\nabla f)=0$. Equation~\eqref{eqpepa} implies 
\begin{equation*}
V(h)\nabla^2_Bf(\nabla_B f, X)=0.
\end{equation*}
Since $h$ is nonconstant, we can find a vertical vector field $V$ such that $V(h)\neq 0$, and then $\nabla^2_Bf(\nabla_B f,X)=0$. Thus, there exists a smooth function $\psi$ on $B^n$ so that $\nabla_B^2 f(\nabla_Bf)=\psi\nabla_B f$. By~\eqref{eqpepa}, one has
\begin{equation}\label{equaqeayu}
\left(V(h)\psi-\rho\frac{V(S_F)}{f^2}\right)|\nabla_B f|^2=0.
\end{equation}
Consider the closed set $A=\{x\in B^n; |\nabla_B f|(x)=0\}$. Note that the set $int(A)$ is empty or, equivalently, $A^C$ is a dense set in $B^n$. Indeed, if there exists $x_0\in int(A)$, then $\nabla_B^2 f(x_0)=0$ that implies $S_F$ is constant (see~\eqref{eqpepa}), which is a contradiction. From~\eqref{equaqeayu} and by continuity, we have
\begin{equation*}
 V(h)\psi-\rho\frac{V(S_F)}{f^2}=0
\end{equation*}
on $B^n\times F^m$. So,
\begin{equation*}
V(h)X(\psi f^2)=0.
\end{equation*}
It follows that $\psi f^2=a$, for some constant $a\neq 0$, and then
\begin{equation}\label{eqpepafibr4}
aV(h)=\rho V(S_F).
\end{equation}
Again from~\eqref{eqpepa}
\begin{equation*}
V(h)\left(\nabla_B^2 f-\frac{a}{f^2}g_B\right)=0
\end{equation*}
and we conclude the proof of item~(i). 

To prove item~(ii), we first obtain from item (i) the identity 
\begin{equation}\label{norconfc}
|\nabla f|^2+\frac{2a}{f}=b,
\end{equation}
for some constant $b.$
    
Using the $\rho$-Einstein soliton equation on $F^m$ (see \cite[Corollary~43]{o1983semi}) and that $\eta=\varphi+fh$, we obtain
\begin{eqnarray}\label{eqpepafibr1}
\nonumber Ric_F-\left(\frac{\Delta_B f}{f}+(m-1)\frac{|\nabla_B f|^2}{f^2}\right) f^2 g_F +f \nabla_F^2 h + f\Big(\nabla_Bf(\varphi)+h|\nabla_B f|^2\Big)g_F\\
=(\lambda+\rho S_g)f^2 g_F.
\end{eqnarray}
Keep in mind that the scalar curvature $S_g$ is given by~\eqref{eq05rho}, and recall the second contracted Bianchi identity $\operatorname{div}_FRic_F= \frac{\nabla_F S_F}{2}$. Thus, we use \eqref{eqpepafibr1} to obtain
\begin{equation}\label{eqpepafibr2}
\frac{\nabla_F S_F}{2}+f \operatorname{div}_F\nabla_F^2 h +f|\nabla_B f|^2\nabla_F h=\rho \nabla_F S_F.
\end{equation}
Substituting \eqref{norconfc} into~\eqref{eqpepafibr2} yields
\begin{equation*}
 \left(\frac{1}{2}-\rho\right)\nabla_F S_F+f \operatorname{div}_F\nabla_F^2h +f\left(b-\frac{2a}{f}\right)\nabla_F h=0.
\end{equation*}
On the other hand \eqref{eqpepafibr4} implies $\nabla_FS_F=\frac{a}{\rho}\nabla_Fh$, and then
\begin{equation*}
a\left(\frac{1}{2\rho} -3
\right) \nabla_F h+f \Big(\operatorname{div}_F\nabla_F^2 h +b\nabla_F h\Big)=0.
\end{equation*}
As $f$ is nonconstant and it depends only on $B^n$, we get
\begin{equation*}
 a\left(\frac{1}{2\rho}-3
\right) \nabla_F h=0.
\end{equation*}
 Hence, $\rho=-1/6
$. Let us now prove that $m=2$. We take traces in \eqref{eqpepafibr1} and we use the value of $\rho$ to get
\begin{eqnarray*}
S_F-\left(\frac{\Delta_B f}{f}+(m-1)\frac{|\nabla_B f|^2}{f^2}\right)f^2 m+f \Delta_F h +f\Big(\nabla_Bf(\varphi)+h|\nabla_B f|^2\Big)m\\
=\left(\lambda +\frac{S_g}{6}\right)f^2 m.
\end{eqnarray*}
Whence
\begin{equation*}
\left(1 -\frac{m}{6}\right)\nabla_F S_F+f \nabla_F\Delta_F h +f|\nabla_B f|^2 m \nabla_F h=0.
\end{equation*}
Substituting $\nabla_FS_F= 6a\nabla_F h$ and \eqref{norconfc} into the previous equation gives
\begin{equation*}
-3a\left(-2+m\right)\nabla_F h+f\Big(\nabla_F\Delta_F h +bm\nabla_Fh\Big)=0.
\end{equation*}
Hence, $m=2$, and this completes the proof of item  (ii), which in turn shows items~\eqref{rhoonlybase-1} and \eqref{rhoonlybase-1-1} of the theorem.

For item~\eqref{rhoonlybase-2} of the theorem, it is enough to prove that $f$ does not satisfy items (i) and (ii) when it is bounded. 

We start by observing that $\nabla^2_B f=\frac{a}{f^2}g_B$, for some constant $a\neq 0$, and we can assume $a>0$. Now, we take a unit vector $v\in T_{p} B$ such that $d f_{p} v> 0$, $p \in B^n$. Let $\gamma$ be the unit speed geodesic on $B^n$ such that $\gamma(0)=p$ and $\dot\gamma(0)=v$. Define $\beta(t)=f(\gamma(t))$. Then, from (i) one has
\begin{equation*}
\ddot\beta(t)=\nabla_B^2 f(\dot\gamma(t),\dot\gamma(t))=\frac{a}{\beta^2(t)}>0.
\end{equation*}
Thus, $\dot\beta(t)>0$ for all $t\geq 0$ and $\beta$ is an increasing function on $[0,\infty)$ and bounded, since $f$ is bounded. Hence $\beta$ has a horizontal 
asymptote. In particular,
\begin{equation*}
 0=\lim_{t\rightarrow+\infty}\dot\beta(t)=\lim_{t\rightarrow+\infty}\left(\dot\beta(0)+\int_0^t\frac{a}{\beta^2(s)}ds\right)>\dot\beta(0)=df_p(v)>0,
\end{equation*}
which is a contradiction. This completes the proof of the theorem.
\end{proof}

\subsection{Necessary and sufficient conditions to construct a gradient Einstein soliton warped product.}

We start with some known results on gradient Ricci almost solitons. A Riemannian manifold $(M^n,g)$ is a gradient Ricci almost soliton if there exist two smooth functions $\eta$ and $\tau$ on $M^n$ satisfying
\begin{equation*}
Ric+\nabla^2\eta=\tau g.
\end{equation*}
From Theorem~\ref{rhoonlybase}, it is clear that a gradient $\rho$-Einstein soliton warped product is a very special case of gradient Ricci almost soliton warped product. We recall that Ricci almost solitons were first defined in 2011 by Pigola, Rigoli, Rimoldi, and Setti in an article bearing the same name \cite{pigola2011ricci}.

Let us consider a gradient Ricci almost soliton warped product $B^n\times_f F^m$, with $\eta=\tilde\varphi$ and $\tau=\tilde\Lambda$, for some smooth functions $\varphi$ and $\Lambda$ on $B^n$. The second author in joint work with Feitosa, Freitas Filho, and Pina~\cite{feitosa2019gradient} established the necessary and sufficient conditions for constructing such a soliton. More precisely, the following two equations 
\begin{equation}\label{eq01rho}
R i c_{B}+\nabla_{B}^{2} \varphi-\frac{m}{f} \nabla_{B}^{2} f=\Lambda g_{B},
\end{equation}
\begin{equation}\label{eq03rho}
-2 \Lambda d \varphi+d\left((2-m-n)\Lambda+|\nabla_{B} \varphi|^{2}-\Delta_{B} \varphi-\frac{m}{f} \nabla_{B} \varphi(f)\right)=0,
\end{equation}
must be satisfied on $B^n$. Moreover, $Ric_F=\mu g_F$, with constant $\mu$ given by
\begin{equation}\label{eq02rho}
\mu=\Lambda f^{2}+f \Delta_{B} f+(m-1)|\nabla_{B} f|^{2}-f \nabla_{B} \varphi(f).
\end{equation}
They proved that equation~\eqref{eq02rho} is a consequence of~\eqref{eq01rho} and~\eqref{eq03rho}. Now, we look for a necessary condition for $\Lambda$ to be a linear function of the scalar curvature $S_g$ of $B^n\times_f F^m$, which satisfies equation~\eqref{eq05rho} on $B^n$. Taking the trace of~\eqref{eq01rho}, we get
\begin{equation}\label{sufrho02}
S_B=n\Lambda-\Delta_B \varphi+\frac{m}{f}\Delta_B f.
\end{equation}
From equations~\eqref{eq05rho}, \eqref{eq02rho} and~\eqref{sufrho02}, we obtain
\begin{equation}\label{sufrho03}
S_g=(n+1)\Lambda-\left(\Delta_B\varphi+\frac{\nabla_B \varphi(f)}{f}\right)-(m-1)\left(\frac{\Delta_B f}{f}+(m-1)\frac{|\nabla_B f|^2}{f^2}\right).
\end{equation}
Since we are looking for conditions for $\Lambda$ to be a linear function of $S_g$ up to a constant, the condition that arises naturally from~\eqref{sufrho03} is
\begin{equation}\label{eq04rho}
\alpha\nabla_B\Lambda=\nabla_B\left(\Delta_B\varphi+\frac{\nabla_B \varphi(f)}{f}+(m-1)\left(\frac{\Delta_B f}{f}+(m-1)\frac{|\nabla_B f|^2}{f^2}\right)\right),
\end{equation}
for some constant $\alpha\neq n+1$. 

The reason why we take $\alpha\neq n+1$  comes from the fact that when $\alpha=n+1$, then the scalar curvature $S_g$ is constant. Indeed, by~\eqref{eq04rho} we get
\begin{equation*}
(n+1)\Lambda=\Delta_B\varphi+\frac{\nabla_B \varphi(f)}{f}+(m-1)\left(\frac{\Delta_B f}{f}+(m-1)\frac{|\nabla_B f|^2}{f^2}\right)+c,
\end{equation*}
for some constant $c$. Substituting this last equation into~\eqref{sufrho03}, we obtain $S_g=c$. In particular, this reduces the problem of construction of the gradient $\rho$-Einstein soliton warped products to the case of gradient Ricci solitons, and then equation~\eqref{eq04rho} is unnecessary, see \cite[Theorem~3]{feitosa2017construction}.

First, we prove the necessary conditions for constructing a gradient $\rho$-Einstein soliton warped product. 

\begin{proposition}\label{condsufrho}
Let $B^{n}\times_{f}F^{m}$ be a gradient $\rho$-Einstein soliton warped product with potential function $\tilde\varphi$ and soliton function $\tilde\Lambda=\lambda+\rho\tilde S_g$. Then equations~\eqref{eq01rho}, \eqref{eq03rho} and \eqref{eq04rho} are satisfied on $(B^n,g_B)$, and $Ric_F=\mu g_F$, where $\mu$ is the constant given by equation~\eqref{eq02rho}.
\end{proposition}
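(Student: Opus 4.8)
The plan is to reduce everything to the general gradient Ricci almost soliton framework recalled just above, where the necessary and sufficient conditions for a warped product $B^n\times_f F^m$ with $\eta=\tilde\varphi$ and $\tau=\tilde\Lambda$ are precisely equations~\eqref{eq01rho}, \eqref{eq03rho}, together with $Ric_F=\mu g_F$ for the constant $\mu$ in~\eqref{eq02rho}. So the first step is to invoke Theorem~\ref{rhoonlybase} (or Proposition~\ref{compct-casePF} in the compact case) to know that, since $f$ is nonconstant, the potential function of our gradient $\rho$-Einstein soliton warped product really is of the form $\eta=\tilde\varphi$ and that $\lambda+\rho S_g$ is the lift $\tilde\Lambda$ of a smooth function $\Lambda$ on $B^n$; a gradient $\rho$-Einstein soliton is in particular a gradient Ricci almost soliton with $\tau=\lambda+\rho S_g$, so the results of~\cite{feitosa2019gradient} apply and immediately give~\eqref{eq01rho}, \eqref{eq03rho} and $Ric_F=\mu g_F$ with $\mu$ as in~\eqref{eq02rho}.

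It then remains only to verify equation~\eqref{eq04rho}, which is the genuinely new content special to the $\rho$-Einstein case. For this I would simply use the defining relation $\Lambda=\lambda+\rho S_g$ with $\lambda$ a constant, so that $\nabla_B\Lambda=\rho\,\nabla_B S_g$. On the other hand, equation~\eqref{sufrho03} — which was derived purely from~\eqref{eq05rho}, \eqref{eq02rho} and~\eqref{sufrho02}, all of which hold here — expresses $S_g$ on $B^n$ as
\begin{equation*}
S_g=(n+1)\Lambda-\left(\Delta_B\varphi+\frac{\nabla_B\varphi(f)}{f}+(m-1)\left(\frac{\Delta_B f}{f}+(m-1)\frac{|\nabla_B f|^2}{f^2}\right)\right).
\end{equation*}
Taking $\nabla_B$ of both sides and substituting $S_g=(\Lambda-\lambda)/\rho$ on the left gives
\begin{equation*}
\left(n+1-\frac1\rho\right)\nabla_B\Lambda=\nabla_B\left(\Delta_B\varphi+\frac{\nabla_B\varphi(f)}{f}+(m-1)\left(\frac{\Delta_B f}{f}+(m-1)\frac{|\nabla_B f|^2}{f^2}\right)\right),
\end{equation*}
which is exactly~\eqref{eq04rho} with $\alpha=n+1-\tfrac1\rho$. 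One should check $\alpha\neq n+1$: this is equivalent to $\tfrac1\rho\neq0$, which always holds since $\rho\neq0$ by hypothesis in the definition of a gradient $\rho$-Einstein soliton. This closes the argument.

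There is no serious obstacle here; the proposition is essentially a bookkeeping statement assembling previously proved facts. The only point requiring a little care is making sure the constant $\alpha$ coming out of the computation is admissible (i.e.\ $\alpha\neq n+1$) so that~\eqref{eq04rho} is meaningful in the sense used later, and — if one wants the cleanest exposition — noting explicitly which of the cited results ($\cite{feitosa2019gradient}$, Theorem~\ref{rhoonlybase}) require which hypotheses, so that the reduction to the almost-soliton setting is fully justified in the stated generality.
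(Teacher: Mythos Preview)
Your argument is essentially the same as the paper's, but the first step---invoking Theorem~\ref{rhoonlybase} to deduce that $\eta=\tilde\varphi$ and $\lambda+\rho S_g=\tilde\Lambda$---is both unnecessary and formally problematic. It is unnecessary because the statement of Proposition~\ref{condsufrho} already \emph{assumes} the potential function is $\tilde\varphi$ and the soliton function is $\tilde\Lambda$; these are hypotheses, not conclusions to be derived. It is problematic because Theorem~\ref{rhoonlybase} carries extra hypotheses (geodesic completeness, nonconstant $f$, and one of the three conditions on $m$, $\rho$, or $f$) that Proposition~\ref{condsufrho} does not impose, so routing through it would narrow the generality of what you prove.

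Once you drop that detour, your proof coincides with the paper's: cite Proposition~2 and equation~(1.3) of~\cite{feitosa2019gradient} for \eqref{eq01rho}, \eqref{eq03rho}, and $Ric_F=\mu g_F$ with $\mu$ as in~\eqref{eq02rho}; then differentiate~\eqref{sufrho03} and use $\Lambda=\lambda+\rho S_g$ to obtain~\eqref{eq04rho} with $\alpha=n+1-\tfrac{1}{\rho}$. Your check that $\alpha\neq n+1$ is a nice touch the paper leaves implicit.
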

\begin{proof}
Equation~\eqref{eq01rho} and the fact that $Ric_F=\mu g_F$ with constant $\mu$ given by~\eqref{eq02rho}, follows directly by Proposition~2 in~\cite{feitosa2019gradient}, while \eqref{eq03rho} is equation~(1.3) in~\cite{feitosa2019gradient} on $B^n$. By taking the covariant derivative of equation~\eqref{sufrho03}, we  conclude that~\eqref{eq04rho} is satisfied for $\alpha=n+1-\frac{1}{\rho}$.
\end{proof}

Second, we prove the sufficient conditions for constructing a gradient $\rho$-Einstein soliton warped product. 
\begin{proposition}\label{constrhoe}
Let $\left(B^{n}, g_{B}\right)$ be a Riemannian manifold with three smooth functions $f>0, \Lambda$, and $\varphi$ satisfying equations~\eqref{eq01rho}, ~\eqref{eq03rho} and~\eqref{eq04rho}. Take the constant $\mu$ satisfying equation~\eqref{eq02rho} and a Riemannian manifold $\left(F^{m}, g_{F}\right)$ with Ricci tensor $Ric_F=\mu g_{F}$. Then $B^{n} \times_{f} F^{m}$ is a gradient $\rho$-Einstein soliton warped product with potential function $\tilde\varphi$, soliton function $\tilde\Lambda$ and $\rho=\frac{1}{n+1-\alpha}$.
\end{proposition}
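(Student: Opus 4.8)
The plan is to reduce to the already-established construction of gradient Ricci almost soliton warped products from~\cite{feitosa2019gradient}, and then to upgrade the soliton function $\tilde\Lambda$ to an affine function of the scalar curvature $S_g$ by means of the auxiliary equation~\eqref{eq04rho}.

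First, since $f>0$, $\Lambda$ and $\varphi$ satisfy~\eqref{eq01rho} and~\eqref{eq03rho}, the constant $\mu$ is prescribed by~\eqref{eq02rho}, and $Ric_F=\mu g_F$, the sufficiency direction of the construction in~\cite{feitosa2019gradient} applies and shows that the warped metric $g=g_B+f^2g_F$ on $B^n\times F^m$ satisfies
\begin{equation*}
Ric_g+\nabla^2_g\tilde\varphi=\tilde\Lambda\,g,
\end{equation*}
that is, $B^n\times_f F^m$ is a gradient Ricci almost soliton warped product with potential function $\tilde\varphi$ and soliton function $\tilde\Lambda$. It then remains only to verify that $\tilde\Lambda$ has the form $\lambda+\rho\tilde S_g$ with $\rho=\frac{1}{n+1-\alpha}$; note that the hypothesis $\alpha\neq n+1$ makes $\rho$ a well-defined nonzero real number, as required in~\eqref{eqres}.

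Next I would recover the scalar curvature identity on the base. Combining the standard warped-product formula~\eqref{eq05rho} for $S_g$ (Bishop--O'Neill~\cite{bishop1969manifolds}, O'Neill~\cite{o1983semi}), the trace~\eqref{sufrho02} of~\eqref{eq01rho}, and the definition~\eqref{eq02rho} of $\mu$, one obtains~\eqref{sufrho03}; in particular $S_g$ is the lift of a smooth function on $B^n$, which legitimizes writing it as $\tilde S_g$. Integrating~\eqref{eq04rho} on $B^n$ (connected, as usual) yields a constant $c$ with
\begin{equation*}
\Delta_B\varphi+\frac{\nabla_B\varphi(f)}{f}+(m-1)\left(\frac{\Delta_B f}{f}+(m-1)\frac{|\nabla_B f|^2}{f^2}\right)=\alpha\Lambda+c.
\end{equation*}
Substituting this into~\eqref{sufrho03} collapses it to $S_g=(n+1-\alpha)\Lambda-c$, hence $\Lambda=\rho S_g+\lambda$ with $\rho=\frac{1}{n+1-\alpha}$ and $\lambda=\frac{c}{n+1-\alpha}$. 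Lifting to $B^n\times F^m$ gives $\tilde\Lambda=\lambda+\rho\tilde S_g$, so the gradient Ricci almost soliton found above is in fact a gradient $\rho$-Einstein soliton with the claimed value of $\rho$, completing the proof.

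Once the construction of~\cite{feitosa2019gradient} is quoted, the argument is essentially bookkeeping; the only genuine content is the last step, where equation~\eqref{eq04rho}---the one hypothesis beyond the gradient Ricci almost soliton conditions---is actually used, and the sole subtlety is that the integration producing the constant $c$ requires $B^n$ to be connected (otherwise one argues componentwise). I do not anticipate any serious obstacle.
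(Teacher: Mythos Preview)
Your proposal is correct and follows essentially the same route as the paper: invoke Theorem~1 of~\cite{feitosa2019gradient} to obtain a gradient Ricci almost soliton warped product, note that~\eqref{sufrho03} holds, then integrate~\eqref{eq04rho} to get $S_g=(n+1-\alpha)\Lambda+c$ and read off $\rho=\frac{1}{n+1-\alpha}$ and $\lambda$. The only difference is expository---you spell out the derivation of~\eqref{sufrho03} and the connectedness caveat, whereas the paper simply asserts these---but the argument is the same.
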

\begin{proof}
Since $f$, $\varphi$ and $\Lambda$ satisfy~\eqref{eq01rho} and~\eqref{eq03rho} on $B^n$, and $Ric_F=\mu g_F$ with $\mu$ given by~\eqref{eq02rho}, Theorem~1 in~\cite{feitosa2019gradient} guarantees that $B^{n}\times_{f} F^{m}$ is a gradient Ricci almost soliton warped product with potential function $\tilde\varphi$ and soliton function $\tilde\Lambda$. In particular, equation~\eqref{sufrho03} is valid, and then by~\eqref{eq04rho} we get
\begin{equation*}
S_g=(n+1-\alpha)\Lambda+c,
\end{equation*}
for some constant $c$. Now, it is enough to define $\rho=\frac{1}{n+1-\alpha}$ and $\lambda=\Lambda-\frac{1}{n+1-\alpha}S_g$ to obtain a gradient $\rho$-Einstein soliton warped product.
\end{proof}

Next, we give an immediate application of Proposition~\ref{condsufrho}.

\begin{corollary}\label{teofitrwp} 
Let $B^n\times_f F^m$ be a gradient $\rho$-Einstein soliton warped product which is either shrinking or steady, with potential function $\tilde\varphi$, nonconstant warping function and nonnegative scalar curvature. If $f$ reaches a minimum and $\rho>0$, then the fiber has positive scalar curvature. In particular, $F^m$ is a compact manifold.
\end{corollary}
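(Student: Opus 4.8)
\emph{Proof proposal.} The plan is to reduce the statement to showing that the (constant) Einstein factor $\mu$ of the fibre is strictly positive, and to extract this from a single differential inequality for $f^{2}$ on the base to which the strong minimum principle applies — the hypothesis that $f$ attains a minimum being exactly what makes that principle usable here.

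First I would set up the structure. Since the potential function is $\tilde\varphi$, the argument in the proof of Proposition~\ref{compct-casePF} shows that $S_{F}$ is constant and that $\lambda+\rho S_{g}$ is the lift of a smooth function $\Lambda$ on $B^{n}$; hence Proposition~\ref{condsufrho} applies and gives $Ric_{F}=\mu g_{F}$ with $\mu$ the constant of equation~\eqref{eq02rho}. Taking the $g_{F}$-trace yields $S_{F}=m\mu$, so it suffices to prove $\mu>0$; here we assume $m\geq 2$ (for $m=1$ the fibre is a curve and $S_{F}\equiv 0$).

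The key step is to rewrite \eqref{eq02rho}. Using $\Delta_{B}(f^{2})=2f\Delta_{B}f+2|\nabla_{B}f|^{2}$ and $\langle\nabla_{B}\varphi,\nabla_{B}(f^{2})\rangle=2f\,\nabla_{B}\varphi(f)$, equation~\eqref{eq02rho} takes the form
\begin{equation*}
\mu=\Lambda f^{2}+(m-2)|\nabla_{B}f|^{2}+\tfrac12\,\Delta_{\varphi}(f^{2}),
\end{equation*}
where $\Delta_{\varphi}:=\Delta_{B}-\langle\nabla_{B}\varphi,\nabla_{B}\,\cdot\,\rangle$ is the drifted Laplacian on $(B^{n},g_{B})$. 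Because the soliton is shrinking or steady we have $\lambda\geq 0$, and with $\rho>0$ and $S_{g}\geq 0$ this gives $\Lambda=\lambda+\rho S_{g}\geq 0$ on $B^{n}$; since $m\geq 2$ we conclude
\begin{equation*}
\Delta_{\varphi}(f^{2})=2\mu-2\Lambda f^{2}-2(m-2)|\nabla_{B}f|^{2}\leq 2\mu\qquad\text{on }B^{n}.
\end{equation*}
Now suppose, for contradiction, that $\mu\leq 0$. Then $f^{2}$ is $\varphi$-superharmonic on the boundaryless manifold $B^{n}$, and since $f$ reaches a minimum so does $f^{2}$, at some point $p$; by the strong minimum principle for the (locally uniformly elliptic, zeroth-order-free) operator $\Delta_{\varphi}$, $f^{2}$ is constant on a neighbourhood of $p$, hence on all of $B^{n}$ by connectedness, contradicting that $f$ is nonconstant. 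Therefore $\mu>0$, i.e.\ $S_{F}=m\mu>0$. Finally, since $Ric_{F}=\mu g_{F}$ with $\mu>0$, the complete fibre $F^{m}$ is compact by the Bonnet--Myers theorem.

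The main obstacle is finding the right manipulation of \eqref{eq02rho}: once it is recast as $\mu=\Lambda f^{2}+(m-2)|\nabla_{B}f|^{2}+\tfrac12\Delta_{\varphi}(f^{2})$, everything falls into place — the hypotheses $\rho>0$, shrinking or steady, and $S_{g}\geq 0$ enter only through $\Lambda\geq 0$, the hypothesis $m\geq 2$ only through $(m-2)|\nabla_{B}f|^{2}\geq 0$, and the assumption that $f$ attains a minimum is precisely what lets the strong minimum principle force $f$ to be constant (a merely bounded-below $\varphi$-superharmonic function on a noncompact base need not be constant, so this hypothesis cannot be dropped).
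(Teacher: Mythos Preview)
Your proposal is correct and follows essentially the same route as the paper: rewrite \eqref{eq02rho} as an elliptic inequality, use $\Lambda=\lambda+\rho S_{g}\geq 0$, assume $\mu\leq 0$, and apply the strong minimum principle at a point where $f$ attains its minimum to force $f$ constant. The only cosmetic difference is that you work with $f^{2}$ (hence need $m\geq 2$ to absorb the $(m-2)|\nabla_{B}f|^{2}$ term), whereas the paper applies the principle directly to $f$ via the inequality $\Delta_{B}f+\tfrac{m-1}{f}|\nabla_{B}f|^{2}-\nabla_{B}\varphi(f)\leq 0$, which also handles $m=1$.
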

\begin{proof}
By Proposition~\ref{condsufrho}, $Ric_F=\mu g_F$, where $\mu$ is the constant given by \eqref{eq02rho} on $B^n$, i.e.,
\begin{equation*}
\frac{\mu}{f}-\Lambda f=\Delta_B f+\frac{m-1}{f}|\nabla_B f|^2-\nabla_B\varphi(f).
\end{equation*}
By assumption $\rho>0$, $S_g\geq0$ and $\lambda> 0$ or $\lambda=0$ that imply $\Lambda=\lambda+\rho S_g\geq 0$. If $\mu\leq 0$ we have
\begin{equation*}
0\geq\Delta_B f+\frac{m-1}{f}|\nabla_B f|^2-\nabla_B\varphi(f).
\end{equation*}
If $f$ reaches a minimum, then it must be a constant by the strong maximum principle, which is a contradiction. So, $\mu>0$, and then $F^m$ is compact, since it is an Einstein manifold.
\end{proof}

\section{Examples}\label{Sec-Examples}
In this section, we present some examples of gradient $\rho$-Einstein soliton warped metrics. For this purpose, we use Proposition~\ref{constrhoe} and we consider bases that are conformal to Euclidean spaces $\mathbb{R}^n$ in the same way as in \cite[Proposition~4]{feitosa2019gradient}.

\begin{example}\label{traceless1}
Let us consider the hyperbolic space $\mathbb{H}^n$, namely, the open half space $x_n >0$ with the metric $\coth^2(x_n)\delta_{ij}$. The functions
\begin{align*}
f&=\coth(x_n),\\
\varphi&=\frac{2}{3}(-2+m+n)\log\big(\cosh(x_n)\big) \quad \hbox{and}\\
\Lambda&=-\frac{1}{3 \cosh^4(x_n)}\big(2(-2+m+n)+(1+m+n)\cosh(2x_n)\big)
\end{align*}
satisfy equations~\eqref{eq01rho} and ~\eqref{eq03rho} on $\mathbb{H}^n$. Besides, the constant $\mu$ given by equation~\eqref{eq02rho} is null. Theorem~1 in~\cite{feitosa2019gradient} guarantees that, for any geodesically complete Ricci flat Riemannian manifold $F^m$, the warped metric $g$ with warping function $f$, potential function $\tilde\varphi$ and soliton function $\tilde\Lambda$ is a geodesically complete gradient Ricci almost soliton on $\mathbb{H}^n\times F^m$. The proof is a straightforward computation. Moreover, from~\eqref{eq05rho} its scalar curvature is
\begin{equation*}
S_g=-\frac{(-1+m+n)(-2+m+n+2\cosh(2x_n))}{\cosh^4(x_n)}.
\end{equation*}
The special case of $\rho$-Einstein solitons occurs when $n=1$ and $m=2$, since equation~\eqref{eq04rho} is satisfied for $\alpha=-1$ on $\mathbb{H}^1$, so that $\mathbb{H}^1\times_f F^2$ is a nontrivial complete gradient steady traceless Ricci soliton warped product. Indeed, the completeness of $\nabla\varphi$ is a straightforward computation. Using the divergent curve criterion we prove that $\mathbb{H}^1 \times_{f} F^2$ is geodesically complete. For this, it is enough to prove that the metric $\coth^2(t)dt^2$ is geodesically complete since the fiber is chosen to be geodesically complete. Let $\gamma:[t_0,\infty)\rightarrow \mathbb{H}^1$ be a divergent curve. First, we assume $\gamma(t)\rightarrow+\infty$ as $t\rightarrow+\infty$, and we suppose without loss of generality that $\gamma'(t)>0$,  $\forall t\geq t_0$. Then
\begin{eqnarray*}
\int^t_{t_0}\coth(t)|\gamma'(t)|dt &=& \int^t_{t_0}\coth(t)\gamma'(t)dt\\
&=& \gamma(t)\coth(t)-\gamma(t_0)\coth(t_0)+\int^t_{t_0}\frac{\gamma(t)}{\sinh^2(t)}dt\\
&\geq& \gamma(t)\coth(t)-\gamma(t_0)\coth(t_0).
\end{eqnarray*}
Hence
\begin{equation*}
\lim_{t\rightarrow+\infty}\int_{t_0}^t \coth(t)|\gamma'(t)|dt=+\infty.
\end{equation*}
The case $\gamma(t)\rightarrow 0$ as $t\to \infty$ is similar, and then we obtain the required completeness of $\mathbb{H}^1\times_f F^2$.
\end{example}

\begin{example}\label{traceless2} Consider the metric $\cosh^2(x_n)\delta_{ij}$ on $\mathbb{R}^n$. The functions
\begin{align*}
f&=\cosh(x_n),\\
\varphi&=\frac{1}{12}(-2+m+n)\left(8\log\left(\cosh(x_n)\right)+\cosh(2x_n)\right)\quad\hbox{and}\\
\Lambda&=-\frac{3-(-2+m+n)\sinh^4(x_n)}{3\cosh^4(x_n)}
\end{align*}
satisfy equations~\eqref{eq01rho} and~\eqref{eq03rho} on $\mathbb{R}^n$. Besides, the constant $\mu$ given by equation~\eqref{eq02rho} is null. Again, Theorem~1 in~\cite{feitosa2019gradient} guarantees that, for any geodesically complete Ricci flat Riemannian manifold $F^m$, the warped metric $g$ with warping function $f$, potential function $\tilde\varphi$ and soliton function $\tilde\Lambda$ is a complete gradient Ricci almost soliton on $\mathbb{R}^n\times F^m$. The proof is a straightforward computation. Moreover, from~\eqref{eq05rho} its scalar curvature is
\begin{equation*}
S_g=-\frac{(-1+m+n)(6-m-n+(-2+m+n)\cosh(2x_n))}{2\cosh^4(x_n)}.
\end{equation*}
Again, the special case of $\rho$-Einstein solitons occurs when $n=1$ and $m=2$, since equation~\eqref{eq04rho} is satisfied for $\alpha=-1$ on $(\mathbb{R},\cosh^2(t)dt^2)$, so that $\mathbb{R}\times_f F^2$ is a nontrivial complete gradient shrinking traceless Ricci soliton warped product with $\lambda=1/3$. We can prove its completeness by an analogous argument as in Example~\ref{traceless1}.
\end{example}

\begin{example}\label{exmplo2res}
Let $\mathbb{R}_+^{n}$, $n \geq 3$, be a Euclidean half space with the canonical coordinates $x=\left(x_{1}, \ldots, x_{n}\right)$ such that $x_n>0$ and the metric $\frac{1}{x_n^4}\delta_{i j}$. The functions
\begin{equation*}
f=\frac{1}{x_n}, \quad \varphi=\frac{2(2-m-n)}{3} \ln{x_n} \quad \mbox{and} \quad \Lambda=\frac{2(5-m-4n)x_n^2}{3}
\end{equation*}
satisfy equations~\eqref{eq01rho},~\eqref{eq03rho} and \eqref{eq04rho} on $\mathbb{R}_+^{n}$ for 
\begin{equation*}
\alpha=-\frac{17+3m^2-18n+4n^2+2m(-8+5n)}{2(4n+m-5)}.    
\end{equation*}
The constant $\mu$ given by equation~\eqref{eq02rho} is $\mu=\frac{5-m-4n}{3}$. Proposition~\eqref{constrhoe} guarantees that, for any Einstein manifold $F^m$, with $Ric_F=\mu g_F$, the warped metric $g$ with warping function $f$, potential function $\tilde\varphi$ and soliton function $\tilde\Lambda$ is a gradient $\rho$-Einstein steady soliton warped metric on $\mathbb{R}^n_+\times F^m$, with $\rho=\frac{1}{n+1-\alpha}$. Moreover, from~\eqref{eq05rho} its scalar curvature is 
\begin{eqnarray*}
S_g&=&-\frac{1}{3}\left(7+3m^2-20n+12n^2+2m(-7+6n)\right)x_n^2.
\end{eqnarray*}
In this case, the warped metric $g$ is incomplete, since the curve $\gamma:[1,\infty)\rightarrow(0,\dots,0,t)$ on $\mathbb{R}^n_+$ satisfies
\begin{eqnarray*}
\lim_{r\rightarrow+\infty}\int_1^r\frac{1}{t^2}dt=1.
\end{eqnarray*}
\end{example}

As we already observed in the introduction, the next example shows that the assumption of completeness is essential for the results by Catino and Mazzieri~\cite[Theorem 1.5]{catino2016gradient} and by Borges~\cite[Theorem 1.1]{valterborges2022}

\begin{example}\label{exemimcp}
Let $\mathbb{R}^{n}$ be a Euclidean space with coordinates $x=\left(x_{1}, \ldots, x_{n}\right)$ and metric $e^{2 \xi} \delta_{i j}$, where $\xi=\sum_{i=1}^{n} \alpha_{i} x_{i}, \alpha_{i} \in \mathbb{R}$ with $\sum_{i}\alpha^2_i=1$ and $n \geq 3$. The functions
\begin{equation*}
f=e^{\xi}, \quad \varphi=\frac{c}{2} e^{2 \xi}-\frac{(2-m-n)}{2} \xi \quad \text {and} \quad \Lambda=c+\frac{(2-m-n)}{2} e^{-2 \xi}
\end{equation*}
satisfy equations~\eqref{eq01rho},~\eqref{eq03rho} and \eqref{eq04rho} on $\mathbb{R}^{n}$ for $\alpha=-2m-n+3$ and any constant $c$. The constant $\mu$ given by equation~\eqref{eq02rho} is null. Proposition~\eqref{constrhoe} guarantees that, for any Ricci flat Riemannian manifold $F^m$, the warped metric $g$ with warping function $f$, potential function $\tilde\varphi$ and soliton function $\tilde\Lambda$ is a gradient Schouten soliton warped metric on $\mathbb{R}^n\times F^m$ which can be either steady, shrinking or expanding, since $\lambda=c$. Moreover, from~\eqref{eq05rho} its scalar curvature is 
\begin{eqnarray*}
S_g=-(m+n-2)(m+n-1)e^{-2\xi}.
\end{eqnarray*}
To show that the warped metric $g$ is incomplete, we can use a similar argument as in Example~\ref{exmplo2res}.
\end{example}

\section{Asymptotic behavior of the potential function}\label{Abpf}

In this section, we prove Theorem~\ref{teocresc2}. We start by proving an interesting result for a class of noncompact geodesically complete Riemannian manifold $(B^n,g_B)$. For this, we consider the following modification of the $m$-Bakry-Emery Ricci tensor, which stems from the construction of both gradient Ricci solitons (see~\cite{feitosa2017construction,gomes2021note}) and gradient $\rho$-Einstein solitons (see Proposition~\ref{condsufrho}) that are realized as warped metrics:
\begin{equation*}
Ric^m_{h, \varphi}:= Ric + \nabla^2\varphi +\nabla^2h-\frac{1}{m}dh\otimes dh=Ric+\nabla^2\varphi-\frac{m}{f}\nabla^2f,
\end{equation*}
where $m$ is a positive integer number, $\varphi$ and $h$ are smooth functions on $B^n$ and $f=e^{-h/m}$.

The next proposition is motivated by the results proved in Section~\ref{SecGESWP}.

\begin{proposition}\label{riccihes}
Let $(B^n,g_B)$ be a noncompact geodesically complete Riemannian manifold that admits two smooth functions $h$ and $\varphi$ such that
\begin{equation}\label{eqriches01}
Ric_{h,\varphi}^m\geq \lambda,
\end{equation}
for some positive constant $\lambda$. Fix a point $p\in B^n$, and take a minimizing unit speed geodesic $\gamma:[0,\infty)\rightarrow B^n$ emanating from $p$.
Then,
\begin{equation*}
    \limsup_{t\rightarrow+\infty}\frac{1}{t}\frac{d\varphi}{dt}\geq\lambda,
\end{equation*}
uniformly along $\gamma$, i.e., it does not depend on the initial direction of the geodesic. 
\end{proposition}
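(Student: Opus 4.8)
The plan is to mimic the well-known Cao--Zhou argument for gradient shrinking Ricci solitons, but with the Bochner-type identity replaced by the Riccati comparison coming from the lower bound on $Ric^m_{h,\varphi}$. First I would set $z(t) = \Delta_B r$ along $\gamma$ (more precisely, the Laplacian of the distance function $r$ from $p$, which is smooth along the interior of a minimizing geodesic) and recall the radial Bochner/Riccati inequality $\dot z + \frac{z^2}{n-1} + Ric_B(\dot\gamma,\dot\gamma)\leq 0$. To bring in the weighted tensor, introduce the $h$-modified quantity $z_h = z - \langle \nabla h,\dot\gamma\rangle$ and absorb the Hessian of $h$; to deal with the extra $\frac1m\,dh\otimes dh$ term one uses the standard trick $\frac{z^2}{n-1}\geq \frac{(z_h)^2}{n-1+m} - (\text{controlled }dh\text{-term})$, i.e. completing the square so that the $-\frac{m}{f}\nabla^2 f = \nabla^2 h - \frac1m dh\otimes dh$ piece reassembles exactly into $Ric^m_{h,\varphi}$. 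After also subtracting the Hessian of $\varphi$ along $\gamma$ — writing $\zeta(t) := z_h(t) - \varphi'(t)$ where $\varphi(t) := \varphi(\gamma(t))$ — the assumption \eqref{eqriches01} yields a clean differential inequality of the form
\begin{equation*}
\dot\zeta(t) + \frac{\zeta(t)^2}{n-1+m} \leq -\lambda.
\end{equation*}

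The second step is to extract the asymptotics of $\varphi'$ from this inequality. The point is that a Riccati inequality $\dot\zeta \leq -\lambda - \frac{\zeta^2}{N}$ (with $N = n-1+m$) cannot have a solution defined on all of $[0,\infty)$ unless $\zeta$ is eventually very negative; quantitatively, integrating shows $\zeta(t) \leq -c\,t$ is forced for large $t$ along any direction for which the geodesic stays minimizing indefinitely, and more usefully one integrates once more to control $\int_0^t \zeta$. Since $z_h(t) = \Delta_B r - \langle\nabla h,\dot\gamma\rangle$ and $\int_0^t z_h\,ds$ is (up to sign and the $h$-weight) the logarithmic derivative of the weighted area element, the noncompleteness obstruction turns the inequality around: because $B^n$ is noncompact we always have minimizing rays to infinity, and along such a ray the $\frac{\zeta^2}{N}$ term must be integrable against $dt$ in an averaged sense, which forces $\frac{1}{t}\int_0^t \zeta(s)\,ds \to 0$ is impossible to be too negative — hence $\limsup_{t\to\infty}\frac{1}{t}z_h(t)$ is bounded below, and feeding this back into $\zeta = z_h - \varphi'$ together with $\dot\zeta + \frac{\zeta^2}{N}\leq -\lambda$ gives $\limsup_{t\to\infty}\frac{1}{t}\varphi'(t)\geq \lambda$. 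The uniformity in the initial direction is automatic because every estimate above depends only on $\lambda$, $n$, $m$, and on data in the fixed ball $B(p,r_0)$, never on $\dot\gamma(0)$.

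The main obstacle I anticipate is the regularity bookkeeping at the cut locus: $\Delta_B r$ is only defined classically along the interior of a minimizing geodesic, so the Riccati inequality must either be localized to $t$ before the cut point or handled in the barrier/distributional sense, and one must argue that noncompactness provides at least one minimizing ray along which $t\to\infty$ makes sense. Cao--Zhou's original argument handles exactly this by working along a fixed minimizing geodesic and only needing the inequality up to its cut time; I would follow the same route, noting that the $\limsup$ statement is insensitive to whether we can continue a particular geodesic past its cut locus, since we are free to choose, for each $t$, a minimizing geodesic of length $t$. The remaining steps — the Riccati manipulation with the $\frac1m dh\otimes dh$ correction and the elementary ODE comparison — are routine, so I would not dwell on the constants.
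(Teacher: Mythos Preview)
Your derived Riccati inequality is wrong, and in fact impossible. After the Wei--Wylie completion of the square you correctly get
\[
\dot z_h + \frac{z_h^2}{n-1+m} \;\leq\; -\lambda + \varphi'',
\]
but when you then set $\zeta = z_h - \varphi'$ the $\varphi'$ does not disappear from the quadratic term: what you actually obtain is
\[
\dot\zeta + \frac{(\zeta+\varphi')^2}{n-1+m} \;\leq\; -\lambda,
\]
not $\dot\zeta + \zeta^2/N \leq -\lambda$. There is no $-\frac{1}{k}(\varphi')^2$ term in $Ric^m_{h,\varphi}$ with which to complete a second square. Worse, the clean inequality you claim cannot hold on $[0,\infty)$ at all: comparison with $\dot y = -\lambda - y^2/N$ forces blowup at time $\leq \pi\sqrt{N/\lambda}$ regardless of the initial value, which would contradict the existence of the minimizing ray. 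So the sentence ``cannot have a solution defined on all of $[0,\infty)$ unless $\zeta$ is eventually very negative'' is a misreading of the Riccati comparison, and the downstream heuristics about averaged integrability of $\zeta^2/N$ do not have a correct inequality to stand on.

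The paper sidesteps this by not using the pointwise Riccati inequality. It follows Cao--Zhou literally: apply the second variation (index form) inequality $0\leq \int_0^r\big((n-1)\dot\phi^2 - \phi^2 Ric(\dot\gamma,\dot\gamma)\big)dt$ with the standard piecewise-linear test function $\phi$, substitute $Ric\geq \lambda - \nabla^2 h + \frac{1}{m}dh\otimes dh - \nabla^2\varphi$, and integrate $\phi^2 h''$ by parts. The cross term $2|\phi\dot\phi\,h'|$ is absorbed via $2\sqrt{ab}\leq a+b$ with $a=m\dot\phi^2$, $b=\frac{1}{m}\phi^2(h')^2$, which exactly cancels the $-\frac{1}{m}\phi^2(h')^2$ contribution; this is the step that replaces your failed second square. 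One is left with $0\leq 2(m+n-1)-\lambda(r-\tfrac13)+\int_0^r\phi^2\varphi''\,dt$, and a final integration by parts on the $\varphi''$ integral bounds it by $\sup_{B(p,1)}|\nabla\varphi| + \sup_{[r-1,r]}\varphi'$. Dividing by $r$ gives the $\limsup$ bound, uniformly in the initial direction. The index-form route is what makes the $\varphi$ term tractable; your Riccati formulation does not.
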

\begin{proof}
The main idea is based on the proof of Proposition~2.1 in~\cite{cao2010complete}, in our case has been necessary to control the extra term $\nabla^2h-\frac{1}{m}dh\otimes dh$. 

Take $r(x)$ the distance function from a point $p\in B^n$, and consider $\gamma:[0,r(x)]\rightarrow B^n$ a minimizing unit speed geodesic, with $\gamma(0)=p$. By the second variation of the distance function, we have
\begin{equation}\label{eqgreo0}
 0\leq\int_{0}^{r}\Big(\dot{\phi}^{2}(t)(n-1)-\phi^2(t) Ric(\gamma',\gamma')\Big) dt,
\end{equation}
for every smooth piecewise function $\phi(t)$ on $\left[0, r\right]$ with $\phi(0)=\phi(r)=0$ and $r>1$. We choose $\phi(t)$ given by
\begin{equation}\label{pessof}
\phi(t)= \begin{cases}t, & t \in[0,1], \\ 1, & t \in\left[1, r-1\right], \\ r-t, & t \in\left[r-1, r\right] .\end{cases}
\end{equation}
Using~\eqref{eqriches01}, \eqref{eqgreo0}, \eqref{pessof}, and integration by parts, we get
\begin{align}\label{limfroeipo1}
\nonumber 0&\leq \int_0^r\Big(\dot\phi^2(n-1)-\phi^2 Ric(\dot\gamma,\dot\gamma)\Big)dt\\
\nonumber&\leq\int_0^r \left(\dot\phi^2(n-1)+\phi^2\Big( -\lambda+\frac{d^2h}{dt^2}-\frac{1}{m}\left(\frac{dh}{dt}\right)^2+\frac{d^2 \varphi}{dt^2}\Big)\right)dt\\
\nonumber&=\int_0^r \left(\dot\phi^2(n-1)-2\phi\dot\phi\frac{dh}{dt}+\phi^2\Big( -\lambda-\frac{1}{m}\left(\frac{dh}{dt}\right)^2+\frac{d^2 \varphi}{dt^2}\Big)\right)dt\\
&\leq\int_0^r \left(\dot\phi^2(n-1)+2\left|\phi\dot\phi\frac{dh}{dt}\right|+\phi^2\Big( -\lambda-\frac{1}{m}\left(\frac{dh}{dt}\right)^2+\frac{d^2 \varphi}{dt^2}\Big)\right)dt.
\end{align}
Applying the elementary inequality $2 \sqrt{a b} \leq a+b$,  for $a=m\dot\phi^2$ and $b=\frac{1}{m}\phi^2\left(\frac{dh}{dt}\right)^2$ we have
\begin{equation*}
2\int_{0}^{r} \left|\phi\dot{\phi}\frac{dh}{dt}\right| d t \leq \int_{0}^{r}\left(m \dot{\phi}^{2}+\frac{1}{m} \phi^{2}\left(\frac{dh}{dt}\right)^2\right) dt .
\end{equation*}
Combining this previous inequality and~\eqref{limfroeipo1} we get
\begin{align}\label{ines01}
\nonumber 0&\leq\int_0^r \left(\dot\phi^2(m+n-1)+\phi^2\Big( -\lambda+\frac{d^2 \varphi}{dt^2}\Big)\right)dt\\
&=2(m+n-1)- \lambda\left(r-\frac{1}{3}\right)+\int_0^r \phi^2\frac{d^2 \varphi}{dt^2}dt.
\end{align}
Analyzing
\begin{align}\label{ines02}
\nonumber\int_0^r \phi^2\frac{d^2 \varphi}{dt^2}dt&=\int_0^1 \Big(-2\phi\dot\phi\frac{d\varphi}{dt}\Big)dt+\int_{r-1}^r \Big(-2\phi\dot\phi\frac{d\varphi}{dt}\Big)dt\\
\nonumber&\leq\sup_{x\in B(p,1)}|\nabla \varphi|+2\int_{r-1}^r (r-t)\frac{d\varphi}{dt}dt\\
&\leq \sup_{x\in B(p,1)}|\nabla \varphi|+\sup_{t\in[r-1,r]}\frac{d\varphi}{dt}.
\end{align}
On the other hand,
\begin{eqnarray*}
\frac{1}{r}\sup_{t\in[r-1,r]}\frac{d\varphi}{dt}\leq \sup_{t\in[r-1,r]}\frac{1}{t}\frac{d\varphi}{dt}\leq\sup_{t\geq r-1}\frac{1}{t}\frac{d\varphi}{dt}.
\end{eqnarray*}
Combining this latter inequality, \eqref{ines01} and~\eqref{ines02}, we obtain
\begin{equation*}
    \lambda\leq\limsup_{t\rightarrow+\infty}\frac{1}{t}\frac{d\varphi}{dt}.
\end{equation*}
This completes the proof of the proposition.
\end{proof}

Now we are in a position to prove an asymptotic behavior of the potential function on a noncompact gradient shrinking $\rho$-Einstein soliton of nonnegative scalar curvature and  $\rho>0$.

\subsection{Proof of Theorem~\ref{teocresc2}}
\begin{proof} By assumption $S\geq 0$ and $\rho>0$, then
\begin{equation*}
Ric+\nabla^2\varphi\geq \lambda g.
\end{equation*}
Since $\lambda>0$ by Proposition~\ref{riccihes} we obtain~\eqref{crepote}, and applying Corollary~5.1 in~\cite{wei2009comparison}, we conclude that $M^n$ has finite fundamental group and that $\operatorname{vol}_\varphi(M)$ is finite. In particular, 
\begin{equation*}
\frac{R}{\log\, \operatorname{vol}_{\varphi}(B(p,R))}\notin L^1(+\infty).
\end{equation*}
Hence, from~\cite[Chapter~3]{pigola2005maximum} (or alternatively \cite[Theorem~9]{pigola2011remarks}) the weak maximum principle at infinity for $\Delta_\varphi$ holds on $(M^n,g)$.
\end{proof}

\subsection{Proof of Corollary~\ref{Comp-RES}}

\begin{proof}
The proof goes by contradiction. Suppose $M^n$ is noncompact. Theorem~\ref{teocresc2} guarantees that for any minimizing unit speed geodesic $\gamma:[0,\infty)\rightarrow M^n$ emanating from $p\in M^n$, we get
\begin{eqnarray*}
\limsup_{t\rightarrow+\infty}\frac{|\nabla \varphi(\gamma(t))|}{t}\geq \limsup_{t\rightarrow+\infty}\frac{1}{t}\frac{d\varphi}{dt}\geq \lambda.
\end{eqnarray*}
On the other hand, by assumption
\begin{equation*}
\limsup_{t\rightarrow+\infty}\frac{|\nabla \varphi(\gamma(t))|}{t}\leq \lambda-\varepsilon.
\end{equation*}
This leads us to a contradiction.
\end{proof}

\begin{corollary}\label{cricomphowp}
Let $B^n\times_{f}F^{m}$ be a geodesically complete gradient shrinking $\rho$-Einstein soliton warped product with nonnegative scalar curvature, potential function $\tilde\varphi$ and $\rho>0$. Suppose there exists a point $p\in B^n$ such that for all $r(x)\geq r_0$, $|\nabla\varphi(x)|\leq (\lambda-\varepsilon)r(x)+c$, for some positive constants $c$ and $\varepsilon$, with $\varepsilon\leq\lambda$, where $r(x)$ is the distance function from $p$ and $r_0$ is a sufficiently large positive integer. Then $B^n\times_f F^m$ is a compact manifold.
\end{corollary}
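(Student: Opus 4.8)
The plan is to argue by contradiction: assume $B^{n}\times_{f}F^{m}$ is noncompact and contradict the growth bound on $\nabla\varphi$. First I would reduce all the relevant data to the base. Since the potential function is the lift $\tilde\varphi$, the function $h$ in the decomposition $\eta=\varphi+fh$ of Proposition~\ref{compct-casePF} is constant, so equation~\eqref{eqpepa} forces $S_F$ to be constant, and then $\lambda+\rho S_g=\tilde\Lambda$ is itself a lift from the base by~\eqref{eq05rho}. Hence Proposition~\ref{condsufrho} applies and gives, on $(B^{n},g_B)$, equation~\eqref{eq01rho} together with $Ric_F=\mu g_F$, where $\mu$ is the constant given by~\eqref{eq02rho}.

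Next I would show the base $B^{n}$ is compact. Writing $h=-m\log f$, equation~\eqref{eq01rho} reads
\[
Ric^{m}_{h,\varphi}=Ric_B+\nabla_B^{2}\varphi-\frac{m}{f}\nabla_B^{2}f=\Lambda\, g_B ,
\]
and since $\rho>0$, $S_g\geq 0$ and $\lambda>0$ (shrinking), we get $\Lambda=\lambda+\rho S_g\geq\lambda>0$, hence $Ric^{m}_{h,\varphi}\geq\lambda\, g_B$. If $B^{n}$ were noncompact, Proposition~\ref{riccihes} would give, for every minimizing unit speed geodesic $\gamma:[0,\infty)\to B^{n}$ emanating from $p$,
\[
\limsup_{t\to+\infty}\frac{1}{t}\frac{d\varphi}{dt}\geq\lambda
\]
uniformly along $\gamma$; but $|\nabla_B\varphi(x)|\leq(\lambda-\varepsilon)r(x)+c$ bounds $\frac{1}{t}\frac{d\varphi}{dt}$ above and forces $\limsup_{t\to+\infty}\frac{1}{t}\frac{d\varphi}{dt}\leq\lambda-\varepsilon<\lambda$, a contradiction --- this is exactly the mechanism used in the proof of Corollary~\ref{Comp-RES}. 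Therefore $B^{n}$ is compact.

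It then remains to see that the fiber is compact as well. If $f$ is constant the warped metric is a Riemannian product; then~\eqref{eq02rho} reads $\mu=\Lambda f^{2}$, which forces $\Lambda$ to be a positive constant, so $F^{m}$ is Einstein with positive Einstein constant and hence compact by Bonnet--Myers. If $f$ is nonconstant, then since $B^{n}$ is now compact the positive function $f$ attains a minimum, so the hypotheses of Corollary~\ref{teofitrwp} are met (shrinking, nonnegative scalar curvature, $\rho>0$, nonconstant warping function attaining a minimum), and we conclude that $F^{m}$ has positive scalar curvature and is compact. In either case $B^{n}\times_{f}F^{m}$ is compact, which completes the proof.

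The only points requiring care are the reduction of all data ($\varphi$, $\Lambda$, $f$, $S_g$, $S_F$) to the base $B^{n}$, so that Propositions~\ref{condsufrho} and~\ref{riccihes} genuinely apply, and the degenerate case of a constant warping function; there is no hard analytic estimate beyond what is already packaged in Proposition~\ref{riccihes} and Corollary~\ref{teofitrwp}.
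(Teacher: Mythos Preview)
Your argument is correct and follows essentially the same route as the paper: derive $Ric^{m}_{h,\varphi}\geq\lambda g_B$ on the base from $S_g\geq 0$ and $\rho>0$, use Proposition~\ref{riccihes} together with the growth bound on $|\nabla\varphi|$ to force $B^{n}$ compact by contradiction (exactly the mechanism of Corollary~\ref{Comp-RES}), and then show $\mu>0$ to get compactness of the Einstein fiber via Bonnet--Myers. The only cosmetic difference is in this last step: the paper evaluates~\eqref{eq02rho} directly at a minimum of $f$ on the now-compact base (so no case split is needed, since even a constant $f$ attains its minimum everywhere and the same computation gives $\mu=\Lambda f^{2}>0$), whereas you separate out constant $f$ and then invoke Corollary~\ref{teofitrwp}, which itself unpacks to that same maximum-principle computation. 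Your explicit reduction of $S_F$, $\Lambda$ and $S_g$ to the base is a welcome clarification that the paper leaves implicit.
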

\begin{proof}
Since $S_g\geq0$, we get
\begin{eqnarray*}
Ric_B+\nabla^2 \varphi-\frac{m}{f}\nabla^2 f=(\lambda+\rho S_g)g_B\geq\lambda g_B.
\end{eqnarray*}
First, we conclude that $B^n$ is compact by contradiction from Proposition~\ref{riccihes} and an analogous argument to the proof of Corollary~\ref{Comp-RES}. Now we apply the weak maximum principle to $f$ in~\eqref{eq02rho} to conclude that $\mu>0$. Thus, by Bonnet-Myers theorem $F^m$ is compact. Therefore $B^n\times_f F^m$ is compact.
\end{proof}

In the more general setting of gradient Ricci almost solitons, we have:
\begin{corollary}\label{Comp-RES-Base}
Let $B^{n} \times_{f}F^{m}$ be a geodesically complete gradient Ricci almost soliton warped product with potential function $\tilde\varphi$ and soliton function $\tilde\lambda$ satisfying $\lambda^*=\inf_{x\in B}\lambda(x)>0$. Suppose there exists a point $p \in B^n$ such that for all $r(x)\geq r_0$, $|\nabla\varphi(x)|\leq (\lambda^{*}-\varepsilon)r(x)+c$, for some positive constants $c$ and $\varepsilon$, with $\varepsilon\leq\lambda^*$, where $r(x)$ is the distance function from $p$  and $r_0$ is a sufficiently large positive integer. Then $B^n\times_f F^m$ is a compact manifold.
\end{corollary}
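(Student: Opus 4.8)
\textbf{Proof plan for Corollary~\ref{Comp-RES-Base}.}

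The plan is to mirror the proof of Corollary~\ref{cricomphowp}, replacing the role of $\lambda$ by the infimum $\lambda^*$. First I would record that, for a gradient Ricci almost soliton warped product $B^n\times_f F^m$ with potential $\tilde\varphi$ and soliton function $\tilde\lambda$, the restriction of the soliton equation to horizontal directions gives exactly equation~\eqref{eq01rho}, namely $Ric_B+\nabla^2_B\varphi-\frac{m}{f}\nabla^2_Bf=\lambda g_B$ on $B^n$ (this is Proposition~2 in~\cite{feitosa2019gradient}, and in the $\rho$-Einstein case it is already used in Proposition~\ref{condsufrho}). Using $\lambda(x)\geq\lambda^*>0$ for all $x\in B^n$, this yields the modified Bakry-Emery lower bound
\begin{equation*}
Ric^m_{h,\varphi}=Ric_B+\nabla^2_B\varphi-\frac{m}{f}\nabla^2_Bf=\lambda g_B\geq\lambda^* g_B,
\end{equation*}
with $f=e^{-h/m}$, so hypothesis~\eqref{eqriches01} of Proposition~\ref{riccihes} holds on $B^n$ with the positive constant $\lambda^*$.

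Next I would argue by contradiction that $B^n$ is compact. If $B^n$ were noncompact, then it is a noncompact geodesically complete Riemannian manifold satisfying $Ric^m_{h,\varphi}\geq\lambda^*$, so Proposition~\ref{riccihes} applies: for any fixed $p\in B^n$ and any minimizing unit speed geodesic $\gamma$ emanating from $p$,
\begin{equation*}
\limsup_{t\rightarrow+\infty}\frac{1}{t}\frac{d\varphi}{dt}\geq\lambda^*,
\end{equation*}
uniformly in the initial direction. Since $\left|\frac{d\varphi}{dt}(\gamma(t))\right|\leq|\nabla\varphi(\gamma(t))|$ and $r(\gamma(t))=t$, the hypothesis $|\nabla\varphi(x)|\leq(\lambda^*-\varepsilon)r(x)+c$ for $r(x)\geq r_0$ forces
\begin{equation*}
\limsup_{t\rightarrow+\infty}\frac{|\nabla\varphi(\gamma(t))|}{t}\leq\lambda^*-\varepsilon<\lambda^*,
\end{equation*}
contradicting the previous inequality. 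Hence $B^n$ is compact.

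To finish, I would pass to the fiber exactly as in Corollary~\ref{cricomphowp}. By Proposition~1 in~\cite{feitosa2019gradient} (equation~\eqref{eq02rho}), the fiber is Einstein, $Ric_F=\mu g_F$, with $\mu=\Lambda f^2+f\Delta_Bf+(m-1)|\nabla_Bf|^2-f\nabla_B\varphi(f)$ on $B^n$, where here $\Lambda=\lambda$ is the soliton function (note $\mu$ is forced to be constant). Since $B^n$ is now compact, $f$ attains a maximum at some point $x_0\in B^n$, where $\Delta_Bf(x_0)\leq 0$, $\nabla_Bf(x_0)=0$; evaluating $\mu$ there and using $\lambda(x_0)\geq\lambda^*>0$ and $f(x_0)>0$ gives $\mu=\lambda(x_0)f(x_0)^2+f(x_0)\Delta_Bf(x_0)\leq\lambda(x_0)f(x_0)^2$ on one side, but more usefully, evaluating at a \emph{minimum} point of $f$ (where $\Delta_Bf\geq0$, $\nabla_Bf=0$) gives $\mu\geq\lambda_{\min}f_{\min}^2>0$, so $\mu>0$. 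Then $F^m$ is a complete Einstein manifold with positive Ricci curvature, hence compact by the Bonnet-Myers theorem. Therefore $B^n\times_f F^m$ is compact.

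The main obstacle is making the reduction to the base rigorous: one must invoke the structural results of~\cite{feitosa2019gradient} to know that restricting the almost-soliton equation to horizontal directions produces precisely~\eqref{eq01rho} with the \emph{same} scalar function $\lambda$, and that~\eqref{eq02rho} produces a genuinely constant $\mu$; once these are in hand, the geodesic estimate via Proposition~\ref{riccihes} and the maximum-principle step for $\mu$ are routine. A minor technical point is that $\varphi$ here need not be bounded below, but Proposition~\ref{riccihes} is stated without any such hypothesis, so no extra argument is required.
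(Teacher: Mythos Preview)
Your proof is correct and follows essentially the same route as the paper's: invoke Proposition~2 of \cite{feitosa2019gradient} to obtain \eqref{eq01rho} on the base, use $\lambda\geq\lambda^*>0$ to apply Proposition~\ref{riccihes} and derive a contradiction with the gradient bound on $\varphi$, then use \eqref{eq02rho} at a minimum of $f$ on the now-compact base to conclude $\mu>0$ and finish with Bonnet--Myers. The only cosmetic differences are that the paper phrases the last step as ``the weak maximum principle applied to $f$ in \eqref{eq02rho}'' (equivalent to your evaluation at an interior minimum on a compact manifold), and that both \eqref{eq01rho} and \eqref{eq02rho} come from Proposition~2 of \cite{feitosa2019gradient}, not Proposition~1.
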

\begin{proof}
Applying Proposition 2 in~\cite{feitosa2019gradient}, we have the proof analogous to the demonstration of Corollary~\ref{cricomphowp}.
\end{proof}

\begin{remark}\label{remaramb}
Note that Corollary~\ref{cricomphowp} has the following alternative proof. First, we observe that, in the same way as in the Ambrose compactness criteria~\cite{ambrose1957theorem}, we affirm that if for every geodesic ray $\gamma:[1,\infty)\rightarrow B^n$, we have
\begin{equation*}
\lim_{t\rightarrow +\infty}\int_1^t Ric^m_h(\gamma',\gamma')d\xi=+\infty,
\end{equation*}
then $B^n$ is compact, see Cavalcante, Oliveira and Santos~\cite{cavalcante2015compactness}. Now we can prove that, under the hypotheses of Corollary~\ref{cricomphowp}, this criteria is satisfied on $B^n$, and then $B^n$ is compact, while the compactness of the fiber $F^m$ follows again by Bonnet-Myers theorem, so $B^n\times_{f}F^{m}$ is compact.
\end{remark}

\section*{Acknowledgements}
The authors would like to express their sincere thanks to the anonymous referee for careful reading and useful comments which improved this paper. They are also grateful to Professor Dragomir Mitkov Tsonev for useful comments, discussions and constant encouragement. José N. V. Gomes has been partially supported by Conselho Nacional de Desenvolvimento Científico e Tecnológico (CNPq), of the Ministry of Science, Technology and Innovation of Brazil (Grant 310458/2021-8), and by Fundação de Amparo à Pesquisa do Estado de São Paulo - FAPESP (Grant 2023/11126-7).

\end{document}